\newtheorem{theorem}{Theorem}
\newtheorem{remark}{Remark}
\newtheorem{lemma}{Lemma}
\newtheorem{assumption}{Assumption}
\newtheorem{definition}{Definition}
\newtheorem{corollary}{Corollary}
\newcommand{\bs}{\boldsymbol}
\newcommand{\mbf}{\mathbf}
\newcommand{\be}{\begin{equation}}
\newcommand{\ee}{\end{equation}}
\date{}
\title{Containment Control of Second-order Multi-agent Systems Under Directed Graphs and Communication Constraints}
\author{A. Abdessameud, I. G. Polushin, and A. Tayebi% <-this % stops a space
\thanks{This work was supported by the Natural Sciences and Engineering Research Council of Canada (NSERC).}
\thanks{The authors are with the Department of Electrical and Computer Engineering, University of Western Ontario, London, Ontario, Canada. The third author is also with the Department of Electrical Engineering, Lakehead University, Thunder Bay, Ontario, Canada.
{\tt\small aabdess@uwo.ca, ipolushi@uwo.ca, tayebi@ieee.org} }%
}
\begin{document}

\maketitle

\begin{abstract}
            The distributed coordination problem of multi-agent systems is addressed under the assumption of intermittent discrete-time information exchange with time-varying (possibly unbounded) delays. Specifically, we consider the containment control problem of second-order multi-agent systems with multiple dynamic leaders under a directed interconnection graph topology. First, we present distributed control algorithms for double integrator dynamics in the full and partial state feedback cases. Thereafter, we propose a method to extend our results to second-order systems with locally Lipschitz nonlinear dynamics. We show that, under the same information exchange constraints, our approach can be applied to solve similar coordination problems for other types of complex second-order multi-agent systems, such as harmonic oscillators. In all cases, our control objectives are achieved under some conditions that can be realized independently from the interconnection topology and from the characteristics of the communication process. The effectiveness of the proposed control schemes is illustrated through some examples and numerical simulations. \vspace{-0.1 in}
\end{abstract}

\section{Introduction}

            The distributed coordination problem of dynamical multi-agent systems has received a growing interest during the last decade due to the broad range of applications involving multiple vehicle systems. The main idea behind distributed coordination is to ensure a collective behavior using local interaction between the agents. This interaction, in the form of information exchange, is generally performed using communication between agents according to their interconnection graph topology. Examples of collective behaviors include consensus, synchronization, flocking, and formation maintenance \cite{qu2009cooperative, Ren:Cao:book}. While multi-agent systems with general linear dynamics have been widely considered in the literature (see, for instance, \cite{scardovi2009synchronization, li2010consensus, li2015designing} and references therein), coordinated control algorithms for multi-agent systems with double integrator dynamics have been actively studied with a particular interest to second-order consensus problems \cite{ren2007information, ren2008distributed, abdessameud2010consensus, abdessameud2013consensus}, cooperative tracking with a single leader \cite{ren2008distributed,  zhao2013distributed}, and the containment control problem with multiple stationary or dynamic leaders \cite{cao2011distributed, li2012distributed, liu2012necessary}. Despite their simple dynamical model, it has been shown that coordinating a team of such agents is a difficult problem especially if one considers restrictions on the interconnection graph between agents or some constraints related to the dynamics of agents such as the lack of velocity measurements and/or input saturations.
            In addition, basic concepts from the above mentioned results have been successfully applied to harmonic oscillators \cite{ren2008synchronization, su2009synchronization}, and second-order nonlinear multi-agent systems with general unknown and globally Lipschitz nonlinearities \cite{liu2013consensus, yu2013distributed, song2013m} or with special models such as Euler-Lagrange systems \cite{Chen:Lewis:2011, Mei:Ren:2011, Mei:Ren:2012, meng2014leader}, attitude dynamics \cite{abdess:TAC:2009, dimos:2009, cai2014leader} and under-actuated unmanned vehicles \cite{lawton2003decentralized, aabdess:tay:book}.

            In this paper, we consider the containment control problem of linear and nonlinear second-order multi-agent systems under a directed interconnection graph. The main control objective is to drive the positions of a group of agents called followers to the convex hull spanned by the positions of another group of dynamic agents called leaders. In contrast to the above mentioned results where ideal communication between agents is assumed, our interest in this work is to design distributed control algorithms in the presence of communication constraints generally imposed in practical situations. Actually, the communication process can be subject to unknown delays and information losses, which may affect the system performance or even destroy the system's stability. Also, communication between agents may not be performed continuously in time, but intermittently at some discontinuous time-intervals or at specified instants of time. This situation can be simply imposed on the communication process to save energy/communication costs in mobile agents, or induced by environmental constraints, such as communication obstacles, and temporary sensor/communication-link failure.

            Recently, various control algorithms have been developed for multi-agent systems in the presence of some of the above mentioned communication constraints. The authors in \cite{yu2010some, sun2009consensus2, lin2011multi, liu2014containment, yan2014containment}, for example, consider consensus problems (including the containment control problem in \cite{liu2014containment, yan2014containment}) for double integrators in the presence of communication delays under different assumptions on the interconnection graph. In these papers, all agents reach some agreement on their final positions with a common constant final velocity provided that self delays are implemented and topology-dependent conditions are satisfied. Note that using self-delays requires perfect measurements of the communication delays. The work in \cite{meng2011leaderless} presents leader-follower schemes for double integrators in the presence of constant communication delays without using self delays. However, only uniform boundedness of the relative position errors between agents is reached under some conditions on the delays and the interconnection topology between agents. In \cite{zhou2014consensus}, state- and output-feedback algorithms that achieve consensus for linear multi-agent systems with more general dynamics have been proposed in the leaderless case. In the latter work, the effects of the communication delays have been compensated using some prediction of the current states of neighboring agents obtained using their old (delayed) states and the constant communication delay, which is assumed to be perfectly known. However, achieving consensus on a dynamic final state is still a challenging problem in the presence of communication delays, especially, with possible information losses that prevent measurements of the delays. The problem becomes more difficult in the case where only partial state measurements are available for feedback. It should be noted that such prediction is not required in the case where  agents are driven to a stationary position. This can be seen in the literature in consensus algorithms for linear multi-agent systems \cite{Munz:CDC, abdessameud2011unified, abdessameud2012hetero} and some classes of nonlinear systems in the presence of constant communication delays \cite{Spong:Chopra:2007, chung:2009, munz:2011, Nuno11, abdess:IFAC:2011, Wang:2013} and time-varying communication delays \cite{Erdong:2008, abdess:VTOL:2011, Abdess:attitude:TAC:2012, Nuno13, Abdessameud:Polushin:Tayebi:2013:ieeetac}
            under undirected and/or directed topologies.  Also, all the above delay-robust results for linear and nonlinear multi-agent systems share the common assumption of continuous-time communication between agents.

            In the case of intermittent communication, the authors in \cite{sun2009consensus} consider first-order multi-agents and suggest to hold, using a zero-order-hold system, the relative positions of interacting agents each time this information is received. In the presence of sufficiently small constant communication delays and bounded packet dropout, the proposed discontinuous algorithm in \cite{sun2009consensus} achieves consensus provided that self-delays are implemented and the non-zero update period of the zero-order-hold system is small.
            A similar approach has been applied for double integrators in \cite{gao2010asynchronous, gao2010consensus}, where asynchronous and synchronous updates of the zero-order-hold systems have been addressed, respectively. Using a different approach, a switching algorithm achieving second-order consensus has been proposed in \cite{Wen:Duan:2012} in cases where communication between agents is lost during small intervals of time. The latter result has been extended to multi-agent systems with general linear dynamics \cite{Wen:Ren:2013} and globally Lipschitz nonlinear dynamics \cite{Wen:L2:2012}, where it has been shown that consensus can still be achieved under some conditions on the communication rates and interaction topology between agents. However, communication delays have not been considered in \cite{gao2010asynchronous, gao2010consensus, Wen:Duan:2012, Wen:Ren:2013, Wen:L2:2012}, and it is not clear whether the methods in these papers are still valid in the case of discrete-time communication. More recently, based on the small-gain approach presented in \cite{Abdessameud:Polushin:Tayebi:2013:ieeetac}, a solution to the synchronization problem of a class of nonlinear second-order systems has been presented in  \cite{abdessameud2015synchronization} assuming intermittent and delayed discrete-time communication between agents. However, the result in \cite{abdessameud2015synchronization} %is limited to the case of a single leader with constant velocity, in particular, it
            cannot be extended in a straightforward manner to the case of multiple leaders with time-varying trajectories.

            The main contribution of this paper consists in providing distributed containment control algorithms for second-order multi-agent systems under a directed interconnection topology and in the presence of the above mentioned communication constraints. More precisely, we consider the case where the communication between agents is discrete in time, intermittent, asynchronous, and subject to non-uniform and unknown irregular communication delays and possible packets dropouts.
            % The following might be good
            The combination of these communication constraints implies that each agent may receive information (with delays) from other agents in the network only at the endpoints of some intervals of time, that we refer to as blackout intervals.
            % Or
            %The combination of these communication constraints imposes intervals of time, that we refer to as blackout intervals, during which each agent may not receive any information (with or without delays) from other agents in the network.
            Based on the small gain theorem used in our earlier work \cite{Abdessameud:Polushin:Tayebi:2013:ieeetac}, we present an approach for the design and analysis of distributed control algorithms, achieving containment control with multiple dynamic leaders, under mild assumptions on the directed interconnection topology provided that the communication blackout intervals are finite. Using this approach, we present distributed containment control algorithms for linear second-order multi-agents modeled by double integrators with and without measurements of the velocities of the followers. In this case, the dynamic leaders are assumed to be moving according to some uniformly bounded and vanishing acceleration. Next, we present a systematic method to solve the containment control problem of a network of non-identical agents with nonlinear dynamics under the same assumption on the motion of the leaders. The latter result is unifying in the sense that it can be applied to a wide class of second-order systems with locally Lipschitz nonlinearities. Then, we relax our assumptions on the leaders' final states and show that our approach can be applied to solve the containment control problem of a class of linear second-order oscillators, including harmonic oscillators, under the same communication constraints.

            To the best of our knowledge, the containment control problem of linear and nonlinear second-order systems
            has never been addressed
            %by taking into account all the communication constraints considered in this work simultaneously and under similar assumptions on the dynamics of the multiple leaders.
            under similar assumptions on the interconnection and communication between agents. As compared to the relevant literature (see, for instance, \cite{yu2010some, sun2009consensus2, lin2011multi, liu2014containment, yan2014containment, sun2009consensus, meng2011leaderless, zhou2014consensus, gao2010asynchronous, gao2010consensus, Wen:Duan:2012, Wen:Ren:2013, Wen:L2:2012}), the proposed approach in this paper achieves our control objectives by taking into account the above mentioned communication constraints simultaneously without imposing conservative assumptions on the interconnection topology between agents. As compared to \cite{abdessameud2015synchronization} that handles similar communication constraints, we address in this paper the more challenging containment control problem with multiple leaders with time-varying trajectories. Our approach in this work is more general and takes into account several considerations related to the dynamics of the leaders and the followers, the availability of states measurements, as well as the above communication constraints, within the same framework. Moreover, containment control, in each of the cases in this study, is achieved under simple design conditions that do not depend neither on the interconnection topology between agents nor on the maximal communication blackout interval which can be unknown and can take large values. The effectiveness of the proposed containment control algorithms is shown through several numerical examples.

\section{Background and Problem Formulation}\label{SecProbStat}
        Throughout the paper, we use $|x|$ to denote the Euclidean norm of a vector $x\in\mathbb{R}^q$ with $\mathbb{R}^q$ being the $q$-dimensional Euclidean space. We denote with $I_q$ and $0_{q}$, respectively, the $q$-dimensional identity matrix and the zero matrix of dimension $q\times q$, and we let $0_{q\times p}\in\mathbb{R}^{q\times p}$ denote the matrix of all zeros. We also use $\mbf{1}_q\in\mathbb{R}^q$ to denote the vector of all ones. The spectral radius of a square matrix $A$ is denoted by $\rho(A)$.
        The Kronecker product of matrices $A$ and $B$ is denoted by $A\otimes B$. The limit $\lim_{t\to +\infty} x(t) = c$ is denoted by $x(t)\to c$.

\subsection{Graph theory background}

        Consider a system composed of $n$  agents that are interconnected in the sense that some information can be transmitted between agents using communication channels. The interconnection topology between agents is modeled by a weighted directed graph $\mathcal{G}=(\mathcal{N},\mathcal{E},\mathcal{A})$ where each agent is represented by a node and $\mathcal{N}:=\{1,\ldots, n\}$ is the set of all nodes. The set $\mathcal{E}\in\mathcal{N}\times\mathcal{N}$ contains ordered pairs of nodes, called edges, and $\mathcal{A}=[a_{ij}]\in\mathbb{R}^{n\times n}$ is the weighted adjacency matrix. An edge $(j, i)\in\mathcal{E}$ is represented by a directed link (an arrow) from node $j$ to node $i$, and indicates that agent $i$ can obtain information from agent $j$ but not {\it vice versa}; in this case, we say that $j$ and $i$ are neighbors (even though the link between them is directed). A finite ordered sequence of distinct edges of $\mathcal{G}$ with the form $(j,l_1),(l_1,l_2), \ldots,  (l_q, i)$ is called a directed path from $j$ to $i$.
        A directed graph $\mathcal{G}$ is said to contain a spanning tree if there exists at least one node that has a directed path to all the other nodes in $\mathcal{G}$.
        The weighted adjacency matrix is defined such that $a_{ii}:= 0$, $a_{ij}>0$ if  $(j,i)\in\mathcal{E}$, and  $a_{ij}=0$ if $(j,i)\notin \mathcal{E}$.  The Laplacian matrix $\mathcal{L}:=[l_{ij}]\in\mathbb{R}^{n\times n}$ associated to the directed graph $\mathcal{G}$ is defined such that: $l_{ii}=\sum_{j=1}^n a_{ij}$, and $l_{ij}=-a_{ij}$ for $i\neq j$. That is, $\mathcal{L}:= \mathcal{D} - \mathcal{A}$, where $\mathcal{D}$, called the in-degree matrix, is a diagonal matrix with the $(i,i)-$th entry being $l_{ii}$.

        In this paper, we consider the case where there exist $m$ followers and $n-m$ leaders ($m < n$). Without loss of generality, we let $\mathfrak{F}:=\{1,\ldots,m\}$ and $\mathfrak{L}:=\{m+1, \dots, n\}$ denote the follower set and the leader set, respectively, so that $\mathfrak{L} = \mathcal{N}\setminus\mathfrak{F}$. Here, an agent $i$ is called a leader, or $i\in\mathfrak{L}$, if $(j,i)\notin \mathcal{E}$ for each $j\in\mathcal{N}$; a leader node does not receive information from any other node in $\mathcal{G}$. An agent $i$ is called a follower, or $i\in\mathfrak{F}$, if $(j,i)\in \mathcal{E}$ for at least one $j\in\mathcal{N}$. Accordingly, matrices $\mathcal{D}$, $\mathcal{A}$, and $\mathcal{L}$ associated with $\mathcal{G}$ take the form
         \begin{eqnarray}\label{D}
         \mathcal{D} &=&  \begin{bmatrix} \mathcal{D}_1 & 0_{m\times \bar{m}}\\0_{\bar{m}\times m}&0_{\bar{m}\times \bar{m}}\end{bmatrix}, \quad \mathcal{A} = \begin{bmatrix} \mathcal{A}_1&\mathcal{A}_2\\ 0_{\bar{m}\times m}&0_{\bar{m}\times \bar{m}}\end{bmatrix}\\
         \label{laplacian}
          \mathcal{L}&=&\left[\begin{array}{ll} \mathcal{L}_1&\mathcal{L}_2\\ 0_{\bar{m}\times m}&0_{\bar{m}\times \bar{m}}\end{array}\right],
         \end{eqnarray}
        with $\bar{m} :=(n-m)$, $\mathcal{L}_1 = \mathcal{D}_1 - \mathcal{A}_1$ and $\mathcal{L}_2 = -\mathcal{A}_2$.
        \begin{assumption}\label{assumption_graph}
        For each node $i\in\mathfrak{F}$, there exists at least one node $j\in\mathfrak{L}$ such that a directed path from $j$ to $i$ exists in $\mathcal{G}$. That is, for each follower, there exists at least one leader having a directed path to the follower.
        \end{assumption}
        Consider the following definition and Lemma used in the subsequent analysis.
        \begin{definition}\cite{qu2009cooperative} Let $\mathcal{Z}_n \subset \mathbb{R}^{n\times n}$ denote the set of all square matrices of dimension $n$ with non-positive off-diagonal entries. A matrix $A \in\mathbb{R}^{n\times n}$ is said to be a nonsingular M-matrix if $A \in \mathcal{Z}_n$ and all eigenvalues of $A$ have positive real parts. Further, a nonsingular M-matrix $A \in \mathcal{Z}_n$ can be written as $A = s I - M$ for some scalar $s>0$ and matrix $M \geq 0$ such that $s > \rho(M)$.\label{definition_M_matrix}
        \end{definition}
        \begin{lemma}\label{lemma_Mei}\cite[Lemma 2.3]{Mei:Ren:2012}
         Consider $\mathcal{L}$ defined in \eqref{laplacian}. Under Assumption~\ref{assumption_graph}, the matrix $\mathcal{L}_1$  is a nonsingular M-matrix, each entry of $ -\mathcal{L}^{-1}_1 \mathcal{L}_2$ is nonnegative, and all row sums of $-\mathcal{L}_1^{-1} \mathcal{L}_2$ are equal to one.
        \end{lemma}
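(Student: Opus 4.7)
The lemma bundles three statements --- $\mathcal{L}_1$ is a nonsingular M-matrix, $-\mathcal{L}_1^{-1}\mathcal{L}_2$ is entrywise nonnegative, and its row sums all equal one --- and my plan is to handle them in that order, since (ii) uses (i), and (iii) is essentially a Laplacian-row-sum identity.

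For (i), the off-diagonal entries of $\mathcal{L}_1$ are $l_{ij}=-a_{ij}\leq 0$, so $\mathcal{L}_1\in\mathcal{Z}_m$ is automatic and by Definition~\ref{definition_M_matrix} it remains to show that every eigenvalue has strictly positive real part. I would argue by contradiction with a Perron/maximum-modulus approach. Suppose $\mathcal{L}_1 v=\lambda v$ with $v\neq 0$ and $\mathrm{Re}(\lambda)\leq 0$, and pick $i^*\in\mathfrak{F}$ with $|v_{i^*}|=\max_i|v_i|$. Expanding the $i^*$-th row and taking absolute values yields $|\lambda-l_{i^*i^*}|\,|v_{i^*}|\leq\sum_{j\in\mathfrak{F},\,j\neq i^*}a_{i^*j}|v_{i^*}|$. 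Since $\mathrm{Re}(\lambda)\leq 0$ and $l_{i^*i^*}=\sum_{j\in\mathcal{N}}a_{i^*j}\geq 0$, one has $|\lambda-l_{i^*i^*}|\geq l_{i^*i^*}$, and the two bounds together force $\sum_{j\in\mathfrak{L}}a_{i^*j}=0$ (so $i^*$ has no incoming leader neighbor) and $|v_j|=|v_{i^*}|$ for every follower $j$ with $a_{i^*j}>0$.

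I would then set $\mathfrak{F}^*:=\{i\in\mathfrak{F}:|v_i|=|v_{i^*}|\}$ and iterate the same argument at every element of $\mathfrak{F}^*$ to obtain two closure properties: no vertex of $\mathfrak{F}^*$ has an incoming leader neighbor, and every incoming follower-neighbor of a vertex in $\mathfrak{F}^*$ also lies in $\mathfrak{F}^*$. Assumption~\ref{assumption_graph} supplies a directed path $v_0\to v_1\to\cdots\to v_k=i^*$ with $v_0\in\mathfrak{L}$. A backward induction on $r$ places $v_k,v_{k-1},\ldots,v_1$ all in $\mathfrak{F}^*$ (each $v_r$ for $r\geq 1$ is necessarily a follower since leaders receive no edges, and is an incoming neighbor of a member of $\mathfrak{F}^*$), so $v_1\in\mathfrak{F}^*$ has the leader $v_0$ as an incoming neighbor --- a contradiction. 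Hence $\mathrm{Re}(\lambda)>0$ for all eigenvalues and (i) is established.

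Claims (ii) and (iii) are then short. For (ii), Definition~\ref{definition_M_matrix} writes $\mathcal{L}_1=sI-M$ with $M\geq 0$ and $\rho(M)<s$, so the Neumann series $\mathcal{L}_1^{-1}=s^{-1}\sum_{k\geq 0}(M/s)^k$ is entrywise nonnegative; multiplying by $-\mathcal{L}_2=\mathcal{A}_2\geq 0$ preserves the sign, giving $-\mathcal{L}_1^{-1}\mathcal{L}_2\geq 0$. For (iii), the full Laplacian satisfies $\mathcal{L}\mathbf{1}_n=0$ by construction, which in the block form \eqref{laplacian} reads $\mathcal{L}_1\mathbf{1}_m+\mathcal{L}_2\mathbf{1}_{\bar m}=0$; pre-multiplying by $-\mathcal{L}_1^{-1}$ gives $(-\mathcal{L}_1^{-1}\mathcal{L}_2)\mathbf{1}_{\bar m}=\mathbf{1}_m$. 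The main obstacle is the graph-theoretic step inside (i): turning the Gershgorin-type equality constraints into a genuine contradiction with Assumption~\ref{assumption_graph} requires careful bookkeeping along the directed path and a clean statement of the closure property of $\mathfrak{F}^*$ under the incoming-neighbor relation.
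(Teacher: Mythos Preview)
The paper does not provide its own proof of this lemma; it simply cites \cite[Lemma 2.3]{Mei:Ren:2012} and uses the result as a black box. So there is nothing in the paper to compare your argument against.

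That said, your proposal is a correct and self-contained proof. A few comments on the execution. In part (i), the chain of inequalities at the maximizing index $i^*$ should be stated as a sandwich: from $l_{i^*i^*}\leq|\lambda-l_{i^*i^*}|$ and $|\lambda-l_{i^*i^*}|\,|v_{i^*}|\leq\sum_{j\in\mathfrak{F},j\neq i^*}a_{i^*j}|v_j|\leq\sum_{j\in\mathfrak{F},j\neq i^*}a_{i^*j}|v_{i^*}|\leq l_{i^*i^*}|v_{i^*}|$ you get equality throughout, which is what simultaneously forces $\sum_{j\in\mathfrak{L}}a_{i^*j}=0$ and $|v_j|=|v_{i^*}|$ for each in-neighbor $j\in\mathfrak{F}$; you state these two consequences but the phrasing in the proposal makes them look like separate deductions rather than two facets of the same equality case. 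The backward-path induction is sound: every intermediate vertex on a directed path ending at a follower is itself a follower (leaders have no incoming edges), so the closure of $\mathfrak{F}^*$ under in-neighbors propagates all the way back to $v_1$, and the leader edge $(v_0,v_1)$ then contradicts $\sum_{j\in\mathfrak{L}}a_{v_1 j}=0$. Parts (ii) and (iii) are standard and correctly handled via the Neumann series for $(sI-M)^{-1}$ and the block form of $\mathcal{L}\mathbf{1}_n=0$.
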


\subsection{Communication process}\label{sec:comm}

            In this work, we consider the case where communication between agents is discrete in time, intermittent, and may be subject to unknown irregular delays and information losses. Specifically, for each pair $(j,i)\in\mathcal{E}$, there exists a strictly increasing unbounded sequence $\mathscr{S}_{ij}\subseteq \{0,1, 2, 3, \ldots \}$ such that the $j$-th agent is allowed to send data to the $i$-th agent at instants $t_{k_{ij}}$, $k_{ij} \in \mathscr{S}_{ij}$. This information exchange is subject to a sequence of communication delays $\left(\tau_{k_{ij}}\right)_{k_{ij} \in \mathscr{S}_{ij}}$, with $\tau_{k_{ij}}\in[0,+\infty]$, meaning that the information sent by the $j$-th agent at $t_{k_{ij}}$ can be received by the $i$-th vehicle at instant $t_{k_{ij}}+\tau_{k_{ij}}$. In particular, the case where $\tau_{k_{ij}}=+\infty$ implies that the corresponding data has been lost during transmission or has never been sent. The following assumptions are imposed on the communication process.

            \begin{assumption}\label{AssumptionCommunicationBlackouts}
                    For each pair $(j,i)\in\mathcal{E}$, there exists a strictly increasing infinite subsequence $\bar{\mathscr{S}}_{ij} = \{k_{ij}^{(1)}, k_{ij}^{(2)}, \ldots \}\subseteq \mathscr{S}_{ij}$  such that:
                    $t_{k_{ij}^{(l+1)}} +\tau_{k_{ij}^{(l+1)}} - t_{k_{ij}^{(l)}}\le T^*$, for $l=1, 2, \ldots$ and for some $T^*>0$.
            \end{assumption}

            \begin{assumption}\label{assump_common_sampling}
                For all $(j,i)\in\mathcal{E}$, the sequence of time instants $t_{k_{ij}}$ satisfies: $t_{k_{ij}}= k_{ij} T$, $k_{ij}\in\mathscr{S}_{ij}$, with $T>0$ being a sampling period available to all agents.
            \end{assumption}
            Assumption~\ref{AssumptionCommunicationBlackouts} essentially states that, for each pair $(j,i)\in\mathcal{E}$, the information sent by the $j$-th agent at the instants $t_{k_{ij}}$, $k_{ij} \in \bar{\mathscr{S}}_{ij}\subseteq \mathscr{S}_{ij}$, are successfully received by the $i$-th agent with the corresponding communication delays. In addition, for each pair $(j,i)\in\mathcal{E}$, the maximum length of communication blackout intervals between the $j$-th and $i$-th agents does not exceed an arbitrary (not necessarily known) bound $T^*$.  Assumption~\ref{assump_common_sampling} is common in sampled-data communication protocols and specifies a common sampling rate to the transmitted data in the network. Note, however, that $\mathscr{S}_{ij}$ and $\bar{\mathscr{S}}_{ij}$ are defined for each edge in $\mathcal{E}$ which indicates that the information exchange described above is asynchronous.

\subsection{Problem statement}

            Consider the $n$ systems interconnected according to a directed graph $\mathcal{G}$, and suppose that the communication process between agents is as described in Section \ref{sec:comm}. The dynamics of agents will be described in the subsequent sections. For $i\in\mathcal{N}$, let $p_i\in\mathbb{R}^N$ denote the position-like state of each agent. Also, let $\mbf{p}_F$ and $\mbf{p}_L$ be the column stack vectors of $p_i$, $i\in\mathfrak{F}$, and $p_i$, $i\in\mathfrak{L}$, respectively\footnote{Throughout the paper, we use notation $\mbf{x}_F$ and $\mbf{x}_L$ to denote the column stack vectors of $x_i$ for $i\in\mathfrak{F}$ and $x_i$ for $i\in\mathfrak{L}$, respectively.}, and let $\mathcal{S}_{L}(t):=\{ p_{m+1}(t), \ldots, p_n(t)\}$. The objective of this work is to design  distributed control schemes that solve the containment control problem, where the positions of the follower agents are to be driven to the convex hull spanned by the positions of the leaders. Formally, it is required that
            \begin{equation}\label{objective1}
            d(p_i(t),  \mathcal{C}[\mathcal{S}_{L}(t)])  \to 0,\qquad \mbox{for}~i\in\mathfrak{F},
            \end{equation}
            where for a point $x$ and a set $M$, $d(x, M)$ denotes the distance between $x$ and $M$, {\it i.e.,} $d(x, M):= \inf_{y\in M} |x-y|$, and $\mathcal{C}[X]:= \{\sum_{j=1}^p \alpha_j x_j ~ | ~x_j \in X, ~\alpha_j \geq 0, ~ \sum_{j=1}^p \alpha_j =1 \}$ denotes the convex hull of the set $X:=\{ x_1, \ldots, x_p\}$ \cite{Mei:Ren:2012}. Under Assumption~\ref{assumption_graph}, the result of Lemma~\ref{lemma_Mei} implies that $-(\mathcal{L}_1^{-1}\mathcal{L}_2\otimes I_N)\mbf{p}_L$ is within the convex hull spanned by the leaders \cite{Mei:Ren:2012}. Therefore, objective  \eqref{objective1} is reached if one guarantees that $\mbf{p}_F(t) + (\mathcal{L}_1^{-1}\mathcal{L}_2\otimes I_N)\mbf{p}_L(t) \to 0$.

            To achieve the above objective, we let the vector $\mbf{p}_j^{(i)}(k_{ij})$ denote the information that can be transmitted from agent $j$ to agent $i$ at instant $t_{k_{ij}}$, for each $(j,i)\in\mathcal{E}$ and $k_{ij}\in \mathscr{S}_{ij}$. In particular, $\mbf{p}_j^{(i)}(k_{ij}):=[\chi_j(t_{k_{ij}}), k_{ij}]$, where $\chi_j(t_{k_{ij}})$ is a vector, to be defined later, that contains some of the $j$-th agent's states measured at instant $t_{k_{ij}}$, and $k_{ij}$ is the sequence number of transmission instants $t_{k_{ij}}$. Accordingly, for each pair $(j,i)\in\mathcal{E}$ and each time instant $t\geq 0$, let $k_{ij}^{\mathrm{m}}(t)$ denote the largest integer number such that $\mbf{p}_j^{(i)}(k_{ij}^{\mathrm{m}}(t))=[\chi_{j}(t_{k_{ij}^{\mathrm{m}}(t)}), k_{ij}^{\mathrm{m}}(t)]$ is the most recent information of agent $j$ that is already delivered to agent $i$ at $t$.
            It should be noted that the number $k_{ij}^{\mathrm{m}}(t)$ can be obtained by a simple comparison of the received sequence numbers.
            
\vspace{-0.1 in}
\section{Technical Lemma}\label{sec:lemma}
            Before we proceed, we present in this section a unifying result that will simplify the forthcoming analysis. Consider a system of $n$-agents interconnected according to $\mathcal{G}$ and governed by the following dynamics
            \begin{eqnarray}
                       \label{filter_leader_lemma}
                       \dot{\eta}_i &=& \Psi_i, \quad ~i\in\mathfrak{L},\\
                       \label{filter_follower_lemma}
                       \dot{\eta}_i &=& -k_{\eta_i}(\eta_i - \delta_i) + \Phi_{i,1}, \quad~i\in\mathfrak{F},
                       \end{eqnarray}
            where  $k_{\eta_i}>0$, $\eta_i\in\mathcal{R}^N$, $i\in\mathcal{N}$, is the position-like state of the $i$-th agent,  the sets $\mathfrak{L}$ and $\mathfrak{F}$ are defined as above, and $\delta_i\in\mathbb{R}^N$, $i\in\mathfrak{F}$, is the output of the following system
            \begin{eqnarray}
            \label{filter:lemma} \dot{\zeta}_i &=& (H_i\otimes I_N) \zeta_i + (B_i\otimes I_N) \varepsilon_i + \Phi_{i,2}\\
             \label{output_filter_lemma}\delta_i &=& \alpha (C_i \otimes I_N) \zeta_i + (1-\alpha)\varepsilon_i \\
             \label{input_filter_lemma}
             \varepsilon_i &=&   \textstyle\frac{1}{\kappa_i}\sum_{j=1}^n a_{ij} \eta_{j}(t_{k_{ij}^{\mathrm{m}}(t)})
             \end{eqnarray}
             for $i\in\mathfrak{F}$, where $\zeta_i\in\mathbb{R}^{\sigma_i N}$, $\sigma_i\in \mathbb{N}$, the vector $\varepsilon_i\in\mathbb{R}^N$ is considered as the input of system \eqref{filter:lemma}-\eqref{output_filter_lemma} with $a_{ij}$ being the $(i,j)-$th entry of the adjacency matrix $\mathcal{A}$ associated to $\mathcal{G}$ and $\kappa_i := \sum_{j=1}^n a_{ij}$ for $i\in\mathfrak{F}$, and the matrices $H_i\in\mathbb{R}^{\sigma_i\times \sigma_i}$, $B_i\in\mathbb{R}^{\sigma_i\times 1}$, and $C_i\in\mathbb{R}^{1\times \sigma_i}$ are given by
             \begin{equation}\label{gains_filter}\begin{array}{l}
                 \setlength{\arraycolsep}{2pt}
                        \renewcommand{\arraystretch}{0.9}
             H_i = \begin{bmatrix}-h_{i,1}&h_{i,1}&0&\ldots&0&0\\0&-h_{i,2}&h_{i,2}&\ldots&0&0\\ \vdots&\vdots&\ddots&\ldots&\vdots&\vdots \\ &&&\ddots&&\\0&0&\ldots&&-h_{i,\sigma_i-1}&h_{i,\sigma_i-1}\\ 0&0&&\ldots&0&-h_{i,\sigma_i}\end{bmatrix},\\  B_i = \begin{bmatrix} 0& \ldots &0& h_{i,\sigma_i}\end{bmatrix}^{\top}, ~
             C_i = \begin{bmatrix} 1&0&\ldots&0\end{bmatrix}\end{array}
             \end{equation}
             with $h_{i,1},~ \ldots, h_{i,\sigma_i}>0$ and $i\in\mathfrak{F}$. The vectors $\Phi_{i,1}$, $\Phi_{i,2}$, $i\in\mathfrak{F}$, and $\Psi_i$, $i\in\mathfrak{L}$, are considered as perturbation terms.

             In \eqref{filter_follower_lemma}-\eqref{input_filter_lemma}, the $i$-th follower, $i\in\mathfrak{F}$, uses the received position-like states from its corresponding neighbors that are transmitted using the communication process described in Section~\ref{sec:comm}. In particular, the vector $\eta_{j}(t_{k_{ij}^{\mathrm{m}}(t)})$ is the most recent position-like state of agent $j$ that is available to agent $i$ at instant $t$. Also, the parameter $\alpha$ is either $1$ or $0$; in particular, $\alpha = 0$ implies that the dynamic system \eqref{filter:lemma} is not implemented.
             \begin{lemma}\label{theorem_unified}
                Consider the multi-agent system \eqref{filter_leader_lemma}-\eqref{input_filter_lemma} and suppose Assumption~\ref{assumption_graph} and Assumption~\ref{AssumptionCommunicationBlackouts} hold. Then, with $\alpha=1$ or $\alpha=0$, the following holds for arbitrary initial conditions:
                 %and for any $T^*$ that can take arbitrary large values:
                \begin{itemize}
                \item [i)] If the vectors $\Phi_{i,1}$, $\Phi_{i,2}$, for $i\in\mathfrak{F}$, and $\Psi_i$, for $i\in\mathfrak{L}$, are uniformly bounded, then $\dot{\bs{\zeta}}_F$, $\dot{\bs{\eta}}_F$ and $\bs{\eta}_F + (\mathcal{L}_1^{-1}\mathcal{L}_2\otimes I_N) \bs{\eta}_L$ are uniformly bounded.
                    % for arbitrarily large and unknown $T^*$.
                    \item [ii)] If, in addition, $\Phi_{i,1}(t)\to 0$, $\Phi_{i,2}(t)\to 0$, $i\in\mathfrak{F}$, $\Psi_i(t)\to 0$, $i\in\mathfrak{L}$, then $\dot{\bs{\zeta}}_F(t)\to 0$,  $\dot{\bs{\eta}}_F(t)\to 0$, and  $\bs{\eta}_F(t) + (\mathcal{L}_1^{-1}\mathcal{L}_2\otimes I_N) \bs{\eta}_L(t) \to 0$.
                        \end{itemize}
                Furthermore, if the conditions in item $i)$ hold and the perturbation terms $\Phi_{i,1}$, $\Phi_{i,2}$, for $i\in\mathfrak{F}$, and $\Psi_i$, for $i\in\mathfrak{L}$, do not converge to zero, then the effects of these perturbation terms can be reduced with a choice of $k_{\eta_i}$ and the entries of $H_i$ provided that $T^*$ is sufficiently small and/or $\bs{\eta}_L(t)$ is slowly varying.
               \end{lemma}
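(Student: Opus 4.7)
The plan is to recast the closed-loop system as a linear ISS system in a containment error driven by the exogenous perturbations plus a delay/sampling residual, and then to close the resulting feedback loop with the small-gain argument of \cite{Abdessameud:Polushin:Tayebi:2013:ieeetac}. First I would introduce the containment error $\mbf{e}(t):=\bs{\eta}_F(t)+(\mathcal{L}_1^{-1}\mathcal{L}_2\otimes I_N)\bs{\eta}_L(t)$ and decompose the stacked follower input as $\bs{\varepsilon}_F=\bs{\varepsilon}_F^{\mathrm{ideal}}+\bs{\Delta}_\varepsilon$, where the key algebraic identity $\bs{\varepsilon}_F^{\mathrm{ideal}}=\bs{\eta}_F-(\mathcal{D}_1^{-1}\mathcal{L}_1\otimes I_N)\mbf{e}$ follows from $\mathcal{L}_1=\mathcal{D}_1-\mathcal{A}_1$ and $\mathcal{L}_2=-\mathcal{A}_2$, and the entries of $\bs{\Delta}_\varepsilon$ are weighted averages of $-\int_{t_{k_{ij}^{\mathrm{m}}(t)}}^{t}\dot{\eta}_j(s)\,ds$. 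By Assumption~\ref{AssumptionCommunicationBlackouts} the integration window has length at most $T^*$, so $|\bs{\Delta}_\varepsilon(t)|$ is bounded by a constant (depending only on the adjacency weights) times $T^*\sup_{s\in[t-T^*,t]}|\dot{\bs{\eta}}(s)|$.

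Substituting this decomposition into \eqref{filter_follower_lemma} for $\alpha=0$ (where $\bs{\delta}_F=\bs{\varepsilon}_F$), the self-term of $K_\eta\bs{\eta}_F$ cancels against the $\bs{\eta}_F$-part of $K_\eta\bs{\varepsilon}_F^{\mathrm{ideal}}$, yielding
\begin{equation*}
\dot{\mbf{e}}=-(K_\eta\mathcal{D}_1^{-1}\mathcal{L}_1\otimes I_N)\mbf{e}+(K_\eta\otimes I_N)\bs{\Delta}_\varepsilon+\bs{\Phi}_1+(\mathcal{L}_1^{-1}\mathcal{L}_2\otimes I_N)\bs{\Psi}_L,
\end{equation*}
with $K_\eta:=\mathrm{diag}(k_{\eta_1},\ldots,k_{\eta_m})$. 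Lemma~\ref{lemma_Mei} guarantees that $\mathcal{L}_1$ is a nonsingular M-matrix; premultiplying by the positive diagonal $K_\eta\mathcal{D}_1^{-1}$ preserves the M-matrix property via the test vector $\mathcal{L}_1^{-1}\mbf{1}_m>0$ together with Definition~\ref{definition_M_matrix}, so $-K_\eta\mathcal{D}_1^{-1}\mathcal{L}_1$ is Hurwitz. For $\alpha=1$, the $\bs{\zeta}_F$ subsystem is appended in cascade: each $H_i$ is upper triangular with negative diagonal, hence Hurwitz, and $C_i(sI-H_i)^{-1}B_i$ is a chain of first-order lags with unit dc gain, so the augmented system $(\bs{\zeta}_F,\mbf{e})$ remains a linear ISS system with respect to $(\bs{\Delta}_\varepsilon,\bs{\Phi}_1,\bs{\Phi}_2,\bs{\Psi}_L)$.

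The remaining step is to close the feedback loop between $\bs{\Delta}_\varepsilon$ and $\dot{\bs{\eta}}$. Expressing $\dot{\bs{\eta}}_F$ in terms of $\mbf{e}$, $\bs{\Delta}_\varepsilon$ and $\bs{\Phi}_1$ and combining with the bound on $\bs{\Delta}_\varepsilon$ sets up an $L_\infty$ small-gain configuration: the Hurwitz error dynamics yield a finite gain from $(\bs{\Delta}_\varepsilon,\bs{\Phi}_1,\bs{\Phi}_2,\bs{\Psi}_L)$ to $\dot{\bs{\eta}}_F$, while the return gain from $\dot{\bs{\eta}}$ to $\bs{\Delta}_\varepsilon$ is proportional to $T^*$ and $\|K_\eta\|$ (and to $\max_{i,j} h_{i,j}$ when $\alpha=1$). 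Invoking the small-gain theorem in the form established in \cite{Abdessameud:Polushin:Tayebi:2013:ieeetac} then yields uniform boundedness of $\dot{\bs{\zeta}}_F$, $\dot{\bs{\eta}}_F$ and $\mbf{e}$, proving item~$i)$; under vanishing perturbations the same estimates force $\dot{\bs{\eta}}_F(t)\to 0$ and hence $\bs{\Delta}_\varepsilon(t)\to 0$, after which a standard converging-input argument on the Hurwitz error dynamics delivers item~$ii)$. The concluding ``furthermore'' claim follows by tracing how the ISS gains scale with $k_{\eta_i}$, $h_{i,j}$, $T^*$ and $\|\bs{\Psi}_L\|_\infty$. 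The hardest step is this last closure: one must carefully set up the $L_\infty$ small-gain loop when the transmission times are asynchronous and corrupted by dropouts, which is handled by working exclusively with the successful-transmission subsequence $\bar{\mathscr{S}}_{ij}$ of Assumption~\ref{AssumptionCommunicationBlackouts} in order to uniformly bound the age $t-t_{k_{ij}^{\mathrm{m}}(t)}$ by $T^*$.
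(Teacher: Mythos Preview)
Your decomposition $\bs{\varepsilon}_F=\bs{\varepsilon}_F^{\mathrm{ideal}}+\bs{\Delta}_\varepsilon$ and the algebraic identity $\bs{\varepsilon}_F^{\mathrm{ideal}}=\bs{\eta}_F-(\mathcal{D}_1^{-1}\mathcal{L}_1\otimes I_N)\mathbf{e}$ are correct, but the small-gain closure you propose does \emph{not} prove the lemma as stated. The difficulty is that your residual $\bs{\Delta}_\varepsilon$ contains $\int_{t_{k_{ij}^{\mathrm m}(t)}}^{t}\dot{\eta}_j(s)\,ds$ for \emph{follower} indices $j\in\mathfrak{F}$, so the loop you set up between $\dot{\bs{\eta}}_F$ and $\bs{\Delta}_\varepsilon$ has a return gain proportional to $T^*$ (you say this yourself). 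The small-gain condition then reads, schematically, $T^*\cdot c(K_\eta,H)\,<\,1$, which fails for large $T^*$. Yet the lemma claims that items~$i)$ and~$ii)$ hold for \emph{arbitrary} $T^*>0$ and arbitrary positive gains~--- this is precisely the point emphasized throughout the paper (``topology-free conditions independent of the unknown $T^*$''). Your argument would only establish the result for sufficiently small $T^*$.

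The paper avoids this by a different choice of loop variables. Instead of writing $\eta_j(t_{k_{ij}^{\mathrm m}(t)})=\eta_j(t)-\int\dot{\eta}_j$, it keeps the delayed \emph{error} $\tilde{\eta}_j(t_{k_{ij}^{\mathrm m}(t)})$ intact as the interconnection signal and absorbs only the leader-driven quantities $\eta_{c_j}(t)-\eta_{c_j}(t_{k_{ij}^{\mathrm m}(t)})$ and $\eta_j(t)-\eta_j(t_{k_{ij}^{\mathrm m}(t)})$, $j\in\mathfrak{L}$, into an \emph{exogenous} residual $\Delta_i$ (bounded by $T^*\sup|\dot{\eta}_{c_i}|$, which depends only on $\bs{\Psi}_L$). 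Each follower subsystem \eqref{dot_tilde_eta_pf_lemma}--\eqref{dot_tilde_zeta_pf_lemma} is then a cascade of first-order lags with IOS gain from $u_i$ to $\tilde{\eta}_i$ exactly equal to~$1$, regardless of $k_{\eta_i}$ and $H_i$. The interconnection matrix becomes $\mathcal{M}=\mathcal{D}_1^{-1}\mathcal{A}_1$, and the closed-loop gain matrix is $\mathscr{G}=\mathcal{D}_1^{-1}\mathcal{A}_1$ with $\rho(\mathscr{G})<1$ purely by the M-matrix property of $\mathcal{L}_1$~--- no dependence on $T^*$ or on the gains. That is the missing idea: route the feedback through the delayed containment errors themselves, not through their derivatives, so that $T^*$ enters only the exogenous forcing and never the loop gain.
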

               \begin{proof} See Appendix~\ref{section_proofs}.
               \end{proof}

\vspace{-0.1 in}
\section{Containment Control of Double-Integrators}\label{sec:linear:systems}
       In this section, we consider the multi-agent system governed by
       %Let $p_i(t)\in\mathbb{R}^N$ and $v_i(t)$ denote the position and velocity vectors of the $i$-th agent satisfying the following dynamics
       \begin{equation}\label{model}
        \dot{p}_i = v_i,\qquad
        \dot{v}_i = \Gamma_i,\quad i\in\mathcal{N}, %:=\{1,\ldots, n\},
        \end{equation}
        where $\Gamma_i$, for $i\in\mathfrak{F}$, is the input vector for each follower, and $\Gamma_i$ for $i\in\mathfrak{L}$ is a function chosen such that each leader evolves with some bounded acceleration and all the leaders converge asymptotically to a common steady state velocity. Specifically, we consider the following assumption.
        \begin{assumption}\label{asum:leader:motion}
        For all $i\in\mathfrak{L}$, $\Gamma_i$ is a uniformly  bounded function such that the state $v_i$ in \eqref{model} is uniformly bounded and $\dot{v}_i(t) \to 0$, $v_i(t)\to v_d\in\mathbb{R}^{N}$, for arbitrary initial conditions.
        \end{assumption}

        %As mentioned above, only a subset of followers have access to the leaders' states. Therefore, each follower needs to estimate some velocity information that reflects the velocities of the dynamic leaders. For this,
        Also, we assume that $\chi_j(t_{k_{ij}}) := [p_j(t_{k_{ij}}), \hat{v}_j(t_{k_{ij}})]$, where $p_j$ is the position of agent $j$ for $j\in\mathcal{N}$, $\hat{v}_j \equiv v_j$, for $j\in\mathfrak{L}$, and $\hat{v}_j$, for $j\in\mathfrak{F}$, denotes a velocity estimate obtained by the $j$-th follower according to an algorithm described below. Using this information exchange, we consider the following control input for each follower in \eqref{model}
        \begin{eqnarray}\label{input:dynamic:1}
             \Gamma_i &=& - k_{d_i} (v_i-\hat{v}_i) - k_{p_i} (p_i - \psi_i) \\
\label{filter_observer_dynamic}
        \dot{\hat{v}}_i &=& -L_{p_i} \Big(\hat{v}_i - \frac{1}{\kappa_i}\sum_{j=1}^n a_{ij}\hat{v}_j(t_{k_{ij}^{\mathrm{m}}(t)})\Big)
        \end{eqnarray}
        for $i\in\mathfrak{F}$, where the vector $\psi_i$ is given by
        %function of the received positions of neighboring agents such that
        \begin{equation}\label{psi_input_dynamic1}
          \mbf{\psi}_i = \frac{1}{\kappa_i}\sum_{j=1}^n a_{ij} \underbrace{\Big(p_j(t_{k_{ij}^{\mathrm{m}}(t)})+ \hat{v}_{j}(t_{k_{ij}^{\mathrm{m}}(t)})\cdot (t - k_{ij}^{\mathrm{m}}(t) T)\Big)}_{\vartheta_{ij}(t)}
          \end{equation}
        and $k_{p_i}$, $k_{d_i}$, $L_{p_i}$, $i\in\mathfrak{F}$, are strictly positive scalar gains, $a_{ij}$ is the $(i,j)$-th element of $\mathcal{A}$, and $\kappa_i := \sum_{j=1}^n a_{ij}$ for $i\in\mathfrak{F}$ satisfies $\kappa_i \neq 0$ in view of Assumption~\ref{assumption_graph}. The term $\vartheta_{ij}$ in \eqref{psi_input_dynamic1} can be regarded as an estimate of the current position of the $j$-th agent, for $(j,i)\in\mathcal{E}$. This term is based on the most recent information of the $j$-th agent available to the $i$-th agent at instant $t$, and depends on the common sampling period $T$, which is available to all agents as per Assumption~\ref{assump_common_sampling}.

        It can be noticed that the control law \eqref{input:dynamic:1}-\eqref{psi_input_dynamic1} might be discontinuous in the presence of the irregularities of the received information due to the communication constraints. To
        %remedy this problem and
        ensure a continuous-time control action, the received information can be moved one integrator away from the control input by letting the vector $\psi_i$ in \eqref{input:dynamic:1} be the solution of the following dynamic system
          \begin{eqnarray}
            \label{dot_psi_input_dynamic}\dot{\psi}_i &=& - k_{\psi_i} \psi_i +  \frac{k_{\psi_i}}{\kappa_i}\sum_{j=1}^n a_{ij} \vartheta_{ij} + \hat{v}_i, \quad i\in\mathfrak{F},
            \end{eqnarray}
        with $k_{\psi_i}>0$, and  $\hat{v}_i$, $\vartheta_{ij}$ given in % defined, respectively, in
        \eqref{filter_observer_dynamic}-\eqref{psi_input_dynamic1}.

         %Note that the control input \eqref{input:dynamic:1}-\eqref{filter_observer_dynamic} with \eqref{psi_input_dynamic1} (or \eqref{dot_psi_input_dynamic}) cannot be implemented in the case where the velocity vectors $v_i$ for $i\in\mathfrak{F}$ are not available for feedback. In this case, we consider the following control algorithm
         In the case where the velocity vectors $v_i$ for $i\in\mathfrak{F}$ are not available for feedback, we consider the following control algorithm
            \begin{eqnarray}\label{input:dynamic:withoutvelocity}
             \Gamma_i &=& -k_{d_i}(\phi_i + p_i - \hat{v}_i)- k_{p_i} (p_i - \psi_i)\\
             \label{dot_phi_input_withoutvelocity}
             \dot{\phi}_i &=& -(\phi_i + p_i) + \Gamma_i
            \end{eqnarray}
            for $i\in\mathfrak{F}$, where $\hat{v}_i$, $i\in\mathcal{N}$, and $\psi_i$, $i\in\mathfrak{F}$, are defined in \eqref{filter_observer_dynamic} and \eqref{psi_input_dynamic1} (or \eqref{dot_psi_input_dynamic}), respectively.

            The signals $\psi_i$ and $\hat{v}_i$ in \eqref{input:dynamic:1} (and in \eqref{input:dynamic:withoutvelocity}) can be considered as a reference position and a reference velocity for each follower agent. The dynamic system \eqref{filter_observer_dynamic}, acting as a distributed observer, is introduced such that all followers reach some agreement on their velocity estimates in the presence of communication constraints. For this, the velocity estimates $\hat{v}_i$, $i\in\mathcal{N}$, are transmitted between agents instead of the actual velocity vectors. In contrast, the reference position $\psi_i$ in \eqref{psi_input_dynamic1} (or in \eqref{dot_psi_input_dynamic}) depends on the received positions of the corresponding neighboring agents. %, in particular, the dynamic system \eqref{dot_psi_input_dynamic} is designed such that the reference position is asymptotically driven to the weighted sum of its corresponding neighbors' positions if .... .... .... .... .
            The signal $\phi_i\in\mathbb{R}^N$ is used in \eqref{input:dynamic:withoutvelocity} to cope for the lack of measurements of the velocity signals $v_i$, $i \in\mathfrak{F}$. The control input $\Gamma_i$ in \eqref{input:dynamic:1} (respectively, in \eqref{input:dynamic:withoutvelocity}-\eqref{dot_phi_input_withoutvelocity}) is designed such that each follower tracks its reference velocity and reference position (respectively, without measurements of the followers' velocities) in the presence of the communication constraints described in Section~\ref{sec:comm}. This can be shown using the small gain framework (Lemma~\ref{theorem_unified}) as stated in the following result.

    \begin{theorem}\label{theorem_nonstationary}
                Consider the network of $n$ systems described by~\eqref{model} and suppose Assumptions~\ref{assumption_graph}-\ref{asum:leader:motion} hold. For each $i\in\mathfrak{F}$, consider the control input \eqref{input:dynamic:1}-\eqref{filter_observer_dynamic} with \eqref{psi_input_dynamic1} or \eqref{dot_psi_input_dynamic}. Pick the gains $k_{p_i}$ and $k_{d_i}$ such that the roots of $x^2+k_{d_i} x+k_{p_i}=0$ are real. Then, the vectors $\dot{\bs{v}}_F$, $\bs{v}_F$, $\mbf{p}_F + (\mathcal{L}_1^{-1}\mathcal{L}_2\otimes I_N)\mbf{p}_L$ are uniformly bounded, $v_i(t)\to v_d$, for $i\in\mathfrak{F}$, and $\mbf{p}_F(t) + (\mathcal{L}_1^{-1}\mathcal{L}_2\otimes I_N)\mbf{p}_L(t)\to 0$ for arbitrary initial conditions.\\
                 %and for any $T^*$ in Assumption~\ref{AssumptionCommunicationBlackouts} that can be arbitrarily large. \\
                Furthermore, if $\Gamma_i(t)$, $i\in\mathfrak{L}$, does not converge to zero, then, the effects of $\Gamma_i$, $i\in\mathfrak{L}$, on the error signal $\mbf{p}_F + (\mathcal{L}_1^{-1}\mathcal{L}_2\otimes I_N)\mbf{p}_L$ can be reduced with a choice of $L_{p_i}$, $k_{p_i}$ and $k_{d_i}$ provided that $T^*$ is small and/or the signal $v_i(t)$, $i\in\mathfrak{L}$, is slowly varying. \\
                The same above results hold when using the control input \eqref{input:dynamic:withoutvelocity}-\eqref{dot_phi_input_withoutvelocity} with \eqref{filter_observer_dynamic} and \eqref{psi_input_dynamic1} or \eqref{dot_psi_input_dynamic}. \hfill$\blacksquare$
    \end{theorem}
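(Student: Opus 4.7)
The plan is to reduce the closed loop to the exact template of Lemma~\ref{theorem_unified} and invoke it twice: first on the velocity-estimator layer \eqref{filter_observer_dynamic} to show that the reference velocity $\hat v_i$ converges to $v_d$, and then on the position layer after factoring the second-order closed loop through the real roots $-\lambda_{i,1},-\lambda_{i,2}$ of $x^2+k_{d_i}x+k_{p_i}=0$. This factorization, which requires exactly the gain condition stated in the theorem, is what turns the second-order dynamics into the cascade form \eqref{filter_follower_lemma}-\eqref{input_filter_lemma}.

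For the first pass I would set $\eta_i:=\hat v_i$, $\alpha=0$, and $\Phi_{i,1}=\Phi_{i,2}=0$. Equation \eqref{filter_observer_dynamic} for followers is already \eqref{filter_follower_lemma}-\eqref{input_filter_lemma} with $k_{\eta_i}=L_{p_i}$; for leaders $\hat v_i\equiv v_i$, so $\Psi_i=\Gamma_i$ is uniformly bounded and vanishing by Assumption~\ref{asum:leader:motion}. Item~(ii) of Lemma~\ref{theorem_unified} then yields $\dot{\hat v}_i(t)\to 0$ for every $i\in\mathfrak F$ and $\hat{\bs v}_F+(\mathcal L_1^{-1}\mathcal L_2\otimes I_N)\hat{\bs v}_L\to 0$; combined with $\hat v_i=v_i\to v_d$ for leaders and the row-sum identity of Lemma~\ref{lemma_Mei}, this gives $\hat v_i(t)\to v_d$ for every follower.

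For the second pass I pass to shifted coordinates $\tilde p_i:=p_i-v_d t$ and $\tilde\psi_i:=\psi_i-v_d t$, in which the leader equation $\dot{\tilde p}_i=v_i-v_d$ plays the role of \eqref{filter_leader_lemma} with $\Psi_i$ uniformly bounded and vanishing. For followers the closed loop reads $\ddot{\tilde p}_i+k_{d_i}\dot{\tilde p}_i+k_{p_i}\tilde p_i=k_{d_i}(\hat v_i-v_d)+k_{p_i}\tilde\psi_i$; introducing $w_i:=\dot{\tilde p}_i+\lambda_{i,2}\tilde p_i$ decomposes this into
\begin{align*}
\dot{\tilde p}_i &= -\lambda_{i,2}\tilde p_i + w_i,\\
\dot w_i &= -\lambda_{i,1}w_i + \lambda_{i,1}\lambda_{i,2}\tilde\psi_i + k_{d_i}(\hat v_i-v_d).
\end{align*}
With the dynamic $\psi_i$ of \eqref{dot_psi_input_dynamic}, $\tilde\psi_i$ itself satisfies a first-order filter driven by $\bar\varepsilon_i:=(1/\kappa_i)\sum_j a_{ij}\tilde p_j(t_{k_{ij}^{\mathrm m}})$ up to the predictor correction $(1/\kappa_i)\sum_j a_{ij}(\hat v_j(t_{k_{ij}^{\mathrm m}})-v_d)(t-k_{ij}^{\mathrm m}T)$, which is bounded by $T^*$ times a vanishing quantity thanks to Assumption~\ref{AssumptionCommunicationBlackouts} and the first pass. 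After rescaling $w_i$ by $\lambda_{i,2}$ this matches \eqref{filter_follower_lemma}-\eqref{input_filter_lemma} with $\eta_i=\tilde p_i$, $\sigma_i=2$, $\alpha=1$, $h_{i,1}=\lambda_{i,1}$, $h_{i,2}=k_{\psi_i}$, $k_{\eta_i}=\lambda_{i,2}$, $\Phi_{i,1}=0$, and $\Phi_{i,2}$ assembled from terms proportional to $\hat v_i-v_d$ and the predictor correction; the static $\psi_i$ of \eqref{psi_input_dynamic1} is handled analogously with $\sigma_i=1$. Item~(ii) then delivers $\dot{\tilde{\bs p}}_F(t)\to 0$ (so $v_i(t)\to v_d$ for followers) and $\tilde{\bs p}_F+(\mathcal L_1^{-1}\mathcal L_2\otimes I_N)\tilde{\bs p}_L\to 0$; since $-\mathcal L_1^{-1}\mathcal L_2\mbf 1_{n-m}=\mbf 1_m$ by Lemma~\ref{lemma_Mei}, the $v_d t$ shifts cancel and the original containment condition $\mbf p_F+(\mathcal L_1^{-1}\mathcal L_2\otimes I_N)\mbf p_L\to 0$ follows.

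The velocity-free case \eqref{input:dynamic:withoutvelocity}-\eqref{dot_phi_input_withoutvelocity} reduces to the preceding analysis via the observation that $\nu_i:=\phi_i+p_i$ satisfies $\frac{d}{dt}(\nu_i-v_i)=-(\nu_i-v_i)$, so $\nu_i-v_i$ decays exponentially irrespective of other signals; substituting $v_i=\nu_i-(\nu_i-v_i)$ in the controller reproduces \eqref{input:dynamic:1} plus an exponentially vanishing perturbation that is absorbed into $\Phi_{i,1}$. The reduced-effect clause for non-vanishing $\Gamma_i$ at the leaders follows directly from the final statement of Lemma~\ref{theorem_unified}, since every residual in $\Phi_{i,1}$ and $\Phi_{i,2}$ is controlled either through $L_{p_i}$ (via $\hat v_i-v_d$) or through $T^*$ (via the predictor correction). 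The main obstacle is the second pass: one must verify that the change of variables $w_i$ and its rescaling align the cascade with the $H_i$-structure of \eqref{gains_filter}, and that every residual ends up either vanishing by the conclusions of the first pass or bounded by $T^*$ times a vanishing quantity.
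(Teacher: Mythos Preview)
Your proposal is correct and follows essentially the same two-pass strategy as the paper: first apply Lemma~\ref{theorem_unified} with $\alpha=0$ to the observer \eqref{filter_observer_dynamic} to obtain $\hat v_i\to v_d$, then shift by $v_d t$, factor the second-order follower dynamics through the real roots into a first-order cascade, and apply Lemma~\ref{theorem_unified} again with $\alpha=1$ and $\sigma_i\in\{1,2\}$ depending on whether the static or dynamic $\psi_i$ is used; the velocity-free case is handled via the exponentially decaying error $\phi_i+p_i-v_i$. The only cosmetic difference is that the paper defines its intermediate variable as $\xi_i=\tfrac{1}{\lambda_i}(v_i-\hat v_i)+p_i$ (so the perturbation $\hat v_i-v_d$ sits in $\Phi_{i,1}$ and an extra $-\dot{\hat v}_i/\lambda_i$ appears in $\Phi_{i,2}$), whereas your $w_i=\dot{\tilde p}_i+\lambda_{i,2}\tilde p_i$ uses $v_i-v_d$ and pushes all residuals into $\Phi_{i,2}$; both reductions fit the template of \eqref{filter_follower_lemma}--\eqref{gains_filter} and yield the same conclusions.
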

  \begin{proof}
        Consider the dynamics of the distributed observer \eqref{filter_observer_dynamic} with $\hat{v}_i \equiv v_i$ for $i\in\mathfrak{L}$, which can be rewritten as in  \eqref{filter_leader_lemma}-\eqref{input_filter_lemma} with $\alpha = 0$. Then, one can show, using Lemma~\ref{theorem_unified} with Assumption~\ref{AssumptionCommunicationBlackouts} and Assumption~\ref{asum:leader:motion}, that         $\dot{\hat{\bs{v}}}_F$, $\hat{\bs{v}}_F + (\mathcal{L}_1^{-1}\mathcal{L}_2\otimes I_N)\bs{v}_L$ are uniformly bounded and asymptotically converge to zero for arbitrary $T^*>0$. Then, $\hat{v}_i$, $i\in\mathfrak{F}$, is uniformly bounded  and $\hat{v}_i(t) \to v_d$, $i\in\mathfrak{F}$, since $\hat{v}_i(t)\to v_d$, $i\in\mathfrak{L}$, and the row sums of $-\mathcal{L}_1^{-1}\mathcal{L}_2$ are equal to one (by Lemma~\ref{lemma_Mei}). In addition, if $\dot{v}_i(t)$, $i\in\mathfrak{L}$, does not converge to zero, then one can show, using Lemma~\ref{theorem_unified}, that the effects of a non-vanishing $\dot{v}_i(t)$, $i\in\mathfrak{L}$, on the error signal $\hat{\bs{v}}_F + (\mathcal{L}_1^{-1}\mathcal{L}_2\otimes I_N)\bs{v}_L$ can be reduced with a choice of $L_{p_i}$ provided that $T^*$ is small and/or $\hat{v}_i(t)$, $i\in\mathfrak{L}$, is a slowly varying signal.

        Consider system \eqref{model} with \eqref{input:dynamic:1}, which can be rewritten as
        \begin{eqnarray}\textstyle
        \label{dot_v_pf_thm1}\dot{v}_i &=& - k_{d_i} \textstyle(v_i-\hat{v}_i) - k_{p_i} (p_i - \nu_i)+\bs{\epsilon}_i\\
        \label{nu_pf_thm1}  \nu_i &=& \textstyle\bar{\alpha}\psi_i + \textstyle\frac{(1-\bar{\alpha})}{\kappa_i}\sum_{j=1}^n a_{ij} \vartheta_{ij}\\
        \label{dot_psi_pf_thm1} \dot{\psi}_i &=&\textstyle - k_{\psi_i} \psi_i +  \textstyle\frac{k_{\psi_i}}{\kappa_i}\sum_{j=1}^n a_{ij} \vartheta_{ij} + \hat{v}_i
          \end{eqnarray}
         for $i\in\mathfrak{F}$, where $\bar{\alpha}=0$ or $\bar{\alpha}=1$ correspond, respectively, to using \eqref{psi_input_dynamic1} or \eqref{dot_psi_input_dynamic}, and $\bs{\epsilon}_i \equiv 0$.  For analysis purposes, suppose that $\bs{\epsilon}_i$ is uniformly bounded and $\bs{\epsilon}_i(t)\to 0$.

        Let $-\lambda_i < 0$ be one of the real roots of $x^2+k_i^d x+k_i^p=0$, for $i\in\mathfrak{F}$. Consider also the new variable $\xi_i = \frac{1}{\lambda_i} (v_i-\hat{v}_i) + p_i$, $i\in\mathfrak{F}$, which, in view of \eqref{model} and \eqref{dot_v_pf_thm1}, satisfies
        \begin{eqnarray}
              \dot{\xi}_i &=& \textstyle\frac{1}{\lambda_i}\Big( -k_{d_i} (v_i-\hat{v}_i) - k_{p_i}(p_i - \nu_i)\Big) + v_i - \frac{1}{\lambda_i}(\dot{\hat{v}}_i- \bs{\epsilon}_i)\nonumber\\
              \label{dot_xi_pf_thm1}&=& \textstyle-\frac{k_{p_i}}{\lambda_i}(\xi_i - \nu_i) + \hat{v}_i - \frac{1}{\lambda_i}(\dot{\hat{v}}_i- \bs{\epsilon}_i), \qquad i\in\mathfrak{F},
        \end{eqnarray}
            where the last equality is obtained using the relations $\dot{p}_i = v_i = \lambda_i(\xi_i - p_i) + \hat{v}_i$ and $k_{d_i}=\lambda_i + \frac{k_{p_i}}{\lambda_i}$, which hold from the definition of $\xi_i$ and $\lambda_i$, respectively.% ($k_{d_i}$ and $k_{p_i}$ are, respectively, the sum and product of the roots of the above characteristic equation).

            Now, define $\bar{p}_i = (p_i - v_{d} t)$ for $i\in\mathcal{N}$, and $\bar{\xi}_i = (\xi_i - v_{d} t)$, $\bar{\psi}_i = (\psi_i - v_{d} t)$, $\bar{\nu}_i = (\nu_i - v_d t)$, for $i\in\mathfrak{F}$. Then, using \eqref{model} and \eqref{dot_v_pf_thm1}-\eqref{dot_xi_pf_thm1}, one can show that
                \begin{equation}\label{filter_closed_full_linear_1:1}\begin{array}{lcll}
                \dot{\bar{p}}_i &=& v_i - v_{d}, &i\in\mathfrak{L}\\
                \dot{\bar{p}}_i &=& -\lambda_i(\bar{p}_i - \bar{\xi}_i) + \hat{v}_i - v_d, & i\in\mathfrak{F},
                \end{array}
                \end{equation}
                with\vspace{-0.1 in}
                \begin{eqnarray}
                \dot{\bar{\xi}}_i &=& \displaystyle\frac{-k_{p_i}}{\lambda_i}(\bar{\xi}_i - \bar{\nu}_i) + \hat{v}_i - v_d  - \frac{1}{\lambda_i}(\dot{\hat{v}}_i- \bs{\epsilon}_i)\\
                \bar{\nu}_i &=& \bar{\alpha}\bar{\psi}_i + (1-\bar{\alpha})\Big(\frac{1}{\kappa_i}\sum_{j=1}^n a_{ij} \bar{p}_{j}(t_{k_{ij}^{\mathrm{m}}(t)}) + \Theta_{i,1}\Big)\\
                \dot{\bar{\psi}}_i &=& -k_{\psi_i}\Big(\bar{\psi}_i - \frac{1}{\kappa}_i\sum_{j=1}^n a_{ij} \bar{p}_{j}(t_{k_{ij}^{\mathrm{m}}(t)}) \Big) + \Theta_{i,2}\end{eqnarray}
                and
                {\small{\begin{equation}\label{Upsilon_thm_dynamic}\begin{array}{l}
                \Theta_{i,1} = \textstyle\frac{1}{\kappa_i} \sum_{j=1}^n a_{ij} \left( (\hat{v}_{j}(t_{k_{ij}^{\mathrm{m}}(t)}) - v_{d})\cdot( t - k_{ij}^{\mathrm{m}}(t) T)\right)\\
                \Theta_{i,2} = k_{\psi_i}\Theta_{i,1} + \hat{v}_i - v_d
                \end{array}\end{equation}}}
                  for $i\in\mathfrak{F}$.
            It is then clear that for $\bar{\alpha} = 0$, the closed loop dynamics \eqref{filter_closed_full_linear_1:1}-\eqref{Upsilon_thm_dynamic} can be written in the form \eqref{filter_leader_lemma}-\eqref{gains_filter} with $\alpha = 1$ and $\sigma_i = 1$, for $i\in\mathfrak{F}$. Also, for $\bar{\alpha} = 1$, system \eqref{filter_closed_full_linear_1:1}-\eqref{Upsilon_thm_dynamic} is equivalent to  \eqref{filter_leader_lemma}-\eqref{gains_filter} with $\alpha = 1$ and $\sigma_i = 2$, for $i\in\mathfrak{F}$. Therefore, the result of the theorem can be shown by verifying the conditions in items $i)$ and $ii)$ of Lemma~\ref{theorem_unified}. By Assumption \ref{asum:leader:motion}, we know that $\dot{\bar{p}}_i$ is uniformly bounded and $\dot{\bar{p}}_i(t)\to 0$ for $i\in\mathfrak{L}$. Also, we have shown that $\dot{\hat{v}}_i$, $\hat{v}_i$ are uniformly bounded, $\dot{\hat{v}}_i(t)\to 0$, and $\hat{v}_i(t) - v_d \to 0$ for all $i\in\mathfrak{F}$. This, with the fact that $t_{k_{ij}^{\mathrm{m}}(t)} = k_{ij}^{\mathrm{m}}(t)T \to +\infty$ and $(t - k_{ij}^{\mathrm{m}}(t) T)\leq T^*$, in view of Assumption~\ref{AssumptionCommunicationBlackouts}, lead one to conclude that $\Theta_{i,1}$, $\Theta_{i,2}$, in \eqref{Upsilon_thm_dynamic}, are uniformly bounded and $\Theta_{i,1}(t)\to 0$, $\Theta_{i,2}(t)\to 0$, for $i\in\mathfrak{F}$.

            Invoking Lemma~\ref{theorem_unified}, we can show that $\dot{\bar{\bs{\xi}}}_F$, $\dot{\bar{\mbf{p}}}_F$, $\bar{\mbf{p}}_F +(\mathcal{L}_1^{-1}\mathcal{L}_2\otimes I_N)\bar{\mbf{p}}_L$ are uniformly bounded and $\dot{\bar{\mbf{p}}}_F(t)\to 0$, $\bar{\mbf{p}}_F(t) + (\mathcal{L}_1^{-1}\mathcal{L}_2\otimes I_N)\bar{\mbf{p}}_L(t)\to 0$, as $t\to+\infty$ for arbitrary initial conditions and for arbitrary $T^*>0$. Consequently, $\dot{v}_i$, $(v_i - v_d)$ are uniformly bounded and $\dot{v}_i(t)\to 0$, $(v_i(t) - v_d)\to 0$, $i\in\mathfrak{F}$. Also, since all row sums of $-\mathcal{L}_1^{-1}\mathcal{L}_2$ are equal to one (see Lemma~\ref{lemma_Mei}), we can  show that $\bar{\mbf{p}}_F +(\mathcal{L}_1^{-1}\mathcal{L}_2\otimes I_N)\bar{\mbf{p}}_L = {\mbf{p}}_F +(\mathcal{L}_1^{-1}\mathcal{L}_2\otimes I_N){\mbf{p}}_L$, which leads to the conclusion  $\mbf{p}_F + (\mathcal{L}_1^{-1}\mathcal{L}_2\otimes I_N)\mbf{p}_L$ is uniformly bounded and $\mbf{p}_F(t) +(\mathcal{L}_1^{-1}\mathcal{L}_2\otimes I_N)\mbf{p}_L(t)\to 0$.

            In the case where $\dot{v}_i$, $i\in\mathfrak{L}$, does not converge to zero, we know that $\hat{v}_i - v_d$, $\dot{\hat{v}}_i$, $\Theta_{i,1}$, and $\Theta_{i,2}$, $i\in\mathfrak{F}$, do not converge to zero. As discussed above, the effects of a non-zero $\dot{v}_i$, $i\in\mathfrak{L}$, on $\hat{v}_i - v_d$ and  $\dot{\hat{v}}_i$, $i\in\mathfrak{F}$, can be reduced with a choice of $L_{p_i}$ for small $T^*$ and/or a slowly varying  $\hat{v}_i(t)$, $i\in\mathfrak{L}$. Similarly, one can deduce from Lemma~\ref{theorem_unified} that the effects of non-zero $\hat{v}_i - v_d$, $\dot{\hat{v}}_i$, $\Theta_{i,1}$, and $\Theta_{i,2}$, $i\in\mathfrak{F}$, on $\dot{\bar{\mbf{p}}}_F$ and  ${\mbf{p}}_F +(\mathcal{L}_1^{-1}\mathcal{L}_2\otimes I_N){\mbf{p}}_L$ can be further reduced using the gains $\lambda_{i}$ and $\frac{k_{p_i}}{\lambda_i}$ for small $T^*$ and/or a slowly varying $v_i(t)$, $i\in\mathfrak{L}$.

            Finally, in the case where $v_i$, $i\in\mathfrak{F}$, are not available for feedback, applying the control algorithm \eqref{input:dynamic:withoutvelocity}-\eqref{dot_phi_input_withoutvelocity} with \eqref{filter_observer_dynamic} and \eqref{psi_input_dynamic1} (or \eqref{dot_psi_input_dynamic}) to \eqref{model}, $i\in\mathfrak{F}$, leads to the closed loop dynamics
             \begin{eqnarray*}
             \dot{v}_i &=& - k_{d_i} (v_i-\hat{v}_i) - k_{p_i} (p_i - \nu_i) - k_{d_i}\tilde{\phi}_i\\
             \dot{\tilde{\phi}}_i &=& - \tilde{\phi}_i
             \end{eqnarray*}
             where $\tilde{\phi}_i := \phi_i + p_i - v_i$, $i\in\mathfrak{F}$, and $\nu_i$ is given in \eqref{nu_pf_thm1}-\eqref{dot_psi_pf_thm1}, for $i\in\mathfrak{F}$. Then, the results of the theorem can be shown following the same arguments as above by letting $\bs{\epsilon}_i := - k_{d_i}\tilde{\phi}_i$ in \eqref{dot_v_pf_thm1}.
 \end{proof}

             In the case where the steady state velocity of the leaders $v_d$ is available to all the followers, the distributed observer \eqref{filter_observer_dynamic} is not required and the following Corollary can be proved using similar steps as in the proof of Theorem~\ref{theorem_nonstationary}.

            \begin{corollary}\label{theorem_stationary}
                Consider the multi-agent system \eqref{model} and suppose Assumptions~\ref{assumption_graph}-\ref{asum:leader:motion} hold. For each $i\in\mathfrak{F}$, consider the control algorithm \eqref{input:dynamic:1} with \eqref{psi_input_dynamic1} (or \eqref{dot_psi_input_dynamic}) by setting $\hat{v}_i\equiv v_d$ for all $i\in\mathfrak{F}$. Also, suppose that the control gains are selected as in Theorem~\ref{theorem_nonstationary}. Then,
                 %and for any $T^*$ in Assumption~\ref{AssumptionCommunicationBlackouts} that can be arbitrarily large,
                $\dot{\bs{v}}_F$, $\bs{v}_F$, $\mbf{p}_F + (\mathcal{L}_1^{-1}\mathcal{L}_2\otimes I_N)\mbf{p}_L$  are uniformly bounded, $v_i(t)\to v_d$, for $i\in\mathfrak{F}$, and  $\mbf{p}_F(t) + (\mathcal{L}_1^{-1}\mathcal{L}_2\otimes I_N)\mbf{p}_L(t)\to 0$, for arbitrary initial conditions.  The same results hold when using the control input \eqref{input:dynamic:withoutvelocity}-\eqref{dot_phi_input_withoutvelocity} with \eqref{psi_input_dynamic1} (or \eqref{dot_psi_input_dynamic}).
            \end{corollary}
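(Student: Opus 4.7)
The corollary is the specialization of Theorem~\ref{theorem_nonstationary} in which the steady-state leader velocity $v_d$ is globally available to all followers, so the distributed observer \eqref{filter_observer_dynamic} is replaced by the identity $\hat{v}_i \equiv v_d$ for $i \in \mathfrak{F}$. My plan is to mirror the proof of Theorem~\ref{theorem_nonstationary} while discarding its first stage: since $\hat{v}_i$ is now a constant, we have $\dot{\hat{v}}_i \equiv 0$ and $\hat{v}_i - v_d \equiv 0$ by construction, so no observer analysis is needed and the problem reduces directly to driving the followers' positions.

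I would introduce the same translated coordinates $\bar{p}_i = p_i - v_d t$, $\bar{\xi}_i = \xi_i - v_d t$, $\bar{\psi}_i = \psi_i - v_d t$, with $\xi_i = \tfrac{1}{\lambda_i}(v_i - v_d) + p_i$ and $-\lambda_i$ a negative real root of $x^2 + k_{d_i} x + k_{p_i} = 0$, exactly as in the proof of Theorem~\ref{theorem_nonstationary}. The resulting closed-loop dynamics take the form \eqref{filter_leader_lemma}-\eqref{gains_filter} of Lemma~\ref{theorem_unified}, with $\alpha = 1$ and $\sigma_i = 1$ when \eqref{psi_input_dynamic1} is used, and $\sigma_i = 2$ when \eqref{dot_psi_input_dynamic} is used. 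The leader perturbation $\Psi_i$ corresponds to $v_i - v_d$, while the follower perturbations $\Phi_{i,1}, \Phi_{i,2}$ collect the analogs of $\Theta_{i,1}, \Theta_{i,2}$ from \eqref{Upsilon_thm_dynamic}. By Assumption~\ref{asum:leader:motion}, $v_i - v_d$ is uniformly bounded and vanishes for $i \in \mathfrak{L}$. The only perturbation requiring a second glance is $\Theta_{i,1} = \tfrac{1}{\kappa_i}\sum_j a_{ij}\bigl(\hat{v}_j(t_{k_{ij}^{\mathrm{m}}(t)}) - v_d\bigr)(t - k_{ij}^{\mathrm{m}}(t) T)$; since $\hat{v}_j \equiv v_d$ for $j \in \mathfrak{F}$, $\hat{v}_j \equiv v_j \to v_d$ for $j \in \mathfrak{L}$, and the age $(t - k_{ij}^{\mathrm{m}}(t) T) \leq T^*$ by Assumption~\ref{AssumptionCommunicationBlackouts}, this term is uniformly bounded and vanishes, and the same holds for $\Theta_{i,2}$.

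Invoking Lemma~\ref{theorem_unified} item ii) then yields uniform boundedness of $\dot{\bar{\mbf{p}}}_F$ and $\bar{\mbf{p}}_F + (\mathcal{L}_1^{-1}\mathcal{L}_2 \otimes I_N)\bar{\mbf{p}}_L$, with both converging to zero. Since the row sums of $-\mathcal{L}_1^{-1}\mathcal{L}_2$ equal one by Lemma~\ref{lemma_Mei}, the affine shift $v_d t$ cancels in the containment error, so the conclusions translate to $\mbf{p}_F + (\mathcal{L}_1^{-1}\mathcal{L}_2\otimes I_N)\mbf{p}_L$ and to $v_i(t) \to v_d$ for $i \in \mathfrak{F}$. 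The partial-state-feedback variant \eqref{input:dynamic:withoutvelocity}-\eqref{dot_phi_input_withoutvelocity} is handled by the same substitution used in Theorem~\ref{theorem_nonstationary}'s proof: setting $\tilde{\phi}_i := \phi_i + p_i - v_i$ gives $\dot{\tilde{\phi}}_i = -\tilde{\phi}_i$, so the extra $-k_{d_i}\tilde{\phi}_i$ in the velocity loop is an exponentially vanishing perturbation $\bs{\epsilon}_i$ that fits the same framework. I do not anticipate a genuine obstacle, as the argument is a strict simplification of Theorem~\ref{theorem_nonstationary}'s proof in which the observer stage becomes trivial.
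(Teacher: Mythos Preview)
Your proposal is correct and matches the paper's approach: the paper does not give a separate proof for this corollary but simply states that it ``can be proved using similar steps as in the proof of Theorem~\ref{theorem_nonstationary},'' which is exactly what you outline, with the observer stage rendered trivial by the substitution $\hat v_i\equiv v_d$.
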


            \begin{remark} Note that, in the special case of multiple leaders with zero steady state velocity, {\it i.e.,} $v_d = 0$, or stationary leaders, {\it i.e.,} $v_i = 0$ for all $i\in\mathfrak{L}$, Assumption~\ref{assump_common_sampling} is not needed in Corollary~\ref{theorem_stationary}.
            \end{remark}

           The containment control problem for double integrators \eqref{model} has been addressed recently in \cite{liu2014containment} and \cite{yan2014containment} assuming continuous-time communication in the presence of uniform communication delays. With the assumptions that the leaders's velocities are constant and the communication delays are smooth and can be measured, objective~\ref{objective1} is achieved in \cite{liu2014containment}, using a state feedback algorithm, under some conditions directly related to the interconnection topology between agents, the upper bound of the communication delays, and the solutions of some LMIs. A similar result is obtained in \cite{liu2014containment, yan2014containment} in the case of constant communication delays. In contrast, the distributed control algorithms in Theorem~\ref{theorem_nonstationary} achieve objective~\eqref{objective1} under weaker assumptions on the communication between agents, that can be subject to unknown, non-uniform, and irregular communication delays with possible packets dropouts, and, in addition, remove the requirements of velocity measurements for the followers.
           %in directed networks assuming intermittent and asynchronous discrete-time communication between agents, that can be subject to unknown non-uniform and irregular communication delays with possible packets dropouts, and remove the requirements of velocity measurements for the follower agents.
           Further, the above results solve the containment control problem for multi-agent system \eqref{model}, with multiple dynamic leaders satisfying Assumption~\ref{asum:leader:motion}, under a simple condition on the control gains. Interestingly, this condition is topology-free and does not depend on the ``unknown'' maximal blackout interval $T^*$ that may take arbitrarily large values.

\vspace{-0.1 in}
\section{Containment Control of Nonlinear Second-Order Multi-agents}\label{sec:nonlinear:systems}
            In this section, we consider the case where the dynamics of the followers are non-identical and  satisfy
            \begin{equation}\label{model_nl}
            \dot{p}_i(t) = v_i(t),\qquad
            \dot{v}_i(t) = F_i(p_i, v_i, \Gamma_i),\quad i\in\mathcal{N},
            \end{equation}
            where the functions $F_i: \mathbb{R}^N\times \mathbb{R}^N \times \mathbb{R}^N \to \mathbb{R}^N$, for $i\in\mathfrak{F}$, are assumed to be continuous and locally Lipschitz with respect to their arguments, and $F_i \equiv \Gamma_i$ for $i\in\mathfrak{L}$.

            Similarly to the previous section, we suppose that $\chi_j(t_{k_{ij}}) := [p_j(t_{k_{ij}}), \hat{v}_j(t_{k_{ij}})]$, where $\hat{v}_i$, $i\in\mathfrak{F}$, is a velocity estimate obtained using a distributed observer described below  and $\hat{v}_i \equiv v_i$, $i\in\mathfrak{L}$. Assuming that the velocities of the followers are available for feedback, we define a reference velocity signal for each follower as follows
            \begin{equation}\label{reference_velocity_nonlinear}
            v_{r_i} := -\lambda_{i}(p_i - \eta_{i,1}) + \hat{v}_i,
            \end{equation}
            where $\lambda_i>0$ and $\eta_i$, $\hat{v}_i$ satisfy
            \begin{equation}\label{filter_dynamic:nonlinear}\left\{\begin{array}{lcl}
            \dot{\eta}_{i,1} &=& -k_{p_i}(\eta_{i,1} - \eta_{i,2})+ \hat{v}_i\\
            \dot{\eta}_{i,2} &=& -k_{d_i} \Big(\eta_{i,2} - \frac{1}{\kappa_i}\sum_{j=1}^n a_{ij}\vartheta_{ij}\Big) + \hat{v}_i
            \end{array}\right.\end{equation}
            \begin{equation}\label{filter_observer:nonlinear}\left\{\begin{array}{lcl}
            \dot{\hat{v}}_i &=& -L_{p_i} (\hat{v}_i - \sigma_{i,1})\\
            \dot{\sigma}_{i,1} &=& -L_{d_i} \Big(\sigma_{i,1}-\frac{1}{\kappa_i}\sum_{j=1}^n a_{ij}\hat{v}_j(t_{k_{ij}^{\mathrm{m}}(t)})\Big)\end{array}\right.
            \end{equation}
            for $i\in\mathfrak{F}$, and where $k_{p_i}$, $k_{d_i}$, $L_{p_i}$, $L_{d_i}$, $i\in\mathfrak{F}$, are strictly positive scalar gains, $\vartheta_{ij}$ is given in \eqref{psi_input_dynamic1}, and $a_{ij}$, $\kappa_i$ are defined as above.

            The main idea behind the introduction of the reference velocity $v_{r_i}$ is different from that of the approach used in Section~\ref{sec:linear:systems}. This reference velocity is generated for each follower in \eqref{model_nl} using the states of the dynamic auxiliary systems  \eqref{filter_dynamic:nonlinear}-\eqref{filter_observer:nonlinear}. The structure of these auxiliary systems is motivated by the result of Lemma~\ref{theorem_unified} that is proved in Appendix~\ref{section_proofs} based on the small-gain framework.
            %Note that the reference velocity $v_{r_i}$ is not independent from the dynamics of the followers.
            %, since the auxiliary system \eqref{filter_dynamic:nonlinear} uses the received positions of neighbors governed by \eqref{model_nl}. Also, $v_{r_i}(t)$ and $\dot{v}_{r_i}(t)$ are well defined, are available for feedback, and $\dot{v}_{r_i}(t)$ is a continuous function of time.
            We will show in Theorem~\ref{theorem_nl} below that all followers coordinate their motion with respect to the leaders' trajectories if there exists a tracking control input $\Gamma_i$ in \eqref{model_nl}, $i\in\mathfrak{F}$, such that each follower tracks its corresponding reference velocity. %In particular, Theorem~\ref{theorem_nl} below gives the sufficient conditions on such tracking control algorithms such that the overall multi-agent system achieves objective~\eqref{objective1} using intermittent and delayed communication between agents under a directed graph.

            \begin{theorem}\label{theorem_nl}
                Consider the multi-agent system \eqref{model_nl} and suppose that Assumptions~\ref{assumption_graph}-\ref{asum:leader:motion} hold. For each $i\in\mathfrak{F}$, consider the reference velocity $v_{r_i}$ given in \eqref{reference_velocity_nonlinear}-\eqref{filter_observer:nonlinear}
                and suppose that there exists a control input $\Gamma_i$ that guarantees:
            \begin{itemize}
            \item [i)] The error vector $e_i := v_i - v_{r_i}$, $i\in\mathfrak{F}$,  is uniformly bounded.
            \item [ii)] If $\dot{v}_{r_i}$ and $v_{r_i}$ are uniformly bounded, then $e_i(t)\to 0$.
            \end{itemize}
             Then, $\bs{v}_F$, $\mbf{p}_F + (\mathcal{L}_1^{-1}\mathcal{L}_2\otimes I_N)\mbf{p}_L$ are uniformly bounded, $v_i(t)\to v_d$, for $i\in\mathfrak{F}$, and $\mbf{p}_F(t) + (\mathcal{L}_1^{-1}\mathcal{L}_2\otimes I_N)\mbf{p}_L(t)\to 0$ for arbitrary initial conditions.% and for any $T^*$ in Assumption~\ref{AssumptionCommunicationBlackouts} that can be arbitrarily large.
             \hfill$\blacksquare$
            \end{theorem}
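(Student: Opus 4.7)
The plan is to invoke Lemma~\ref{theorem_unified} twice --- once on the distributed velocity observer \eqref{filter_observer:nonlinear}, and once on the position/filter cascade assembled from \eqref{model_nl} and \eqref{reference_velocity_nonlinear}--\eqref{filter_dynamic:nonlinear} --- using hypotheses (i)--(ii) of the theorem on $e_i := v_i - v_{r_i}$ to bridge the two applications.

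First I would recast \eqref{filter_observer:nonlinear} in the form \eqref{filter_leader_lemma}--\eqref{input_filter_lemma} with $\sigma_i = 1$, $\alpha = 1$, $k_{\eta_i} = L_{p_i}$, $h_{i,1} = L_{d_i}$, and $\Phi_{i,1} = \Phi_{i,2} = 0$, the lemma's ``leader'' state playing the role of the actual leader velocity $v_i$, $i\in\mathfrak{L}$, so that $\Psi_i = \Gamma_i$ is uniformly bounded with $\Psi_i(t)\to 0$ by Assumption~\ref{asum:leader:motion}. Item~(ii) of Lemma~\ref{theorem_unified} then yields $\dot{\hat{v}}_i(t)\to 0$ and $\hat{\bs{v}}_F + (\mathcal{L}_1^{-1}\mathcal{L}_2\otimes I_N)\bs{v}_L \to 0$; combined with $v_i\to v_d$ for $i\in\mathfrak{L}$ and the unit-row-sum property of $-\mathcal{L}_1^{-1}\mathcal{L}_2$ from Lemma~\ref{lemma_Mei}, this delivers $\hat{v}_i(t)\to v_d$ for each follower, with $\hat{v}_i$ and $\dot{\hat{v}}_i$ uniformly bounded.

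Next I would eliminate the steady-state drift by setting $\bar{p}_i := p_i - v_d t$ and $\bar{\eta}_{i,k} := \eta_{i,k} - v_d t$, so that $\vartheta_{ij} - v_d t = \bar{p}_j(t_{k_{ij}^{\mathrm{m}}(t)}) + \bigl(\hat{v}_j(t_{k_{ij}^{\mathrm{m}}(t)}) - v_d\bigr)(t - k_{ij}^{\mathrm{m}}(t) T)$. Substituting $v_i = -\lambda_i(\bar{p}_i - \bar{\eta}_{i,1}) + \hat{v}_i + e_i$ into $\dot{\bar{p}}_i = v_i - v_d$ reproduces \eqref{filter_follower_lemma} with $\Phi_{i,1} = (\hat{v}_i - v_d) + e_i$, while \eqref{filter_dynamic:nonlinear} written in the $\bar{\eta}$ coordinates matches \eqref{filter:lemma}--\eqref{gains_filter} with $\sigma_i = 2$, $\alpha = 1$, $h_{i,1} = k_{p_i}$, $h_{i,2} = k_{d_i}$, and $\Phi_{i,2}$ whose two components carry $(\hat{v}_i - v_d)$ and $k_{d_i}\Theta_i + (\hat{v}_i - v_d)$, with $\Theta_i := \frac{1}{\kappa_i}\sum_j a_{ij}\bigl(\hat{v}_j(t_{k_{ij}^{\mathrm{m}}(t)}) - v_d\bigr)(t - k_{ij}^{\mathrm{m}}(t) T)$. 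Since $|t - k_{ij}^{\mathrm{m}}(t) T|\le T^*$ by Assumption~\ref{AssumptionCommunicationBlackouts} and $\hat{v}_j(t)\to v_d$, both $\Theta_i$ and $\Phi_{i,2}$ are uniformly bounded and converge to zero; hypothesis~(i) of the theorem makes $\Phi_{i,1}$ uniformly bounded, and $\Psi_i = v_i - v_d$ is uniformly bounded with $\Psi_i(t)\to 0$.

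Item~(i) of Lemma~\ref{theorem_unified} then supplies uniform boundedness of $\dot{\bar{\mbf{p}}}_F$, $\dot{\bs{\zeta}}_F$, and of $\bar{\mbf{p}}_F + (\mathcal{L}_1^{-1}\mathcal{L}_2\otimes I_N)\bar{\mbf{p}}_L$, which equals $\mbf{p}_F + (\mathcal{L}_1^{-1}\mathcal{L}_2\otimes I_N)\mbf{p}_L$ thanks to the unit row-sum property. The main obstacle is closing the loop on $e_i$: to fire hypothesis~(ii) of the theorem I need $v_{r_i}$ and $\dot{v}_{r_i}$ uniformly bounded, yet $v_{r_i} = v_i - e_i$ depends on the closed-loop signal $v_i$. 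This apparent circularity is broken by noting that hypothesis~(i) supplies $e_i$ bounded \emph{a priori}; the bound on $\dot{\bar{p}}_i$ just obtained then makes $v_i = \dot{\bar{p}}_i + v_d$ bounded, and differentiating \eqref{reference_velocity_nonlinear} together with the established bounds on $\dot{\bar{\eta}}_{i,1}$ and $\dot{\hat{v}}_i$ produces $\dot{v}_{r_i}$ bounded. Hypothesis~(ii) therefore gives $e_i(t)\to 0$, whence $\Phi_{i,1}(t)\to 0$; applying item~(ii) of Lemma~\ref{theorem_unified} concludes with $\dot{\bar{\mbf{p}}}_F(t)\to 0$ (i.e.\ $v_i(t)\to v_d$ for each follower) and $\mbf{p}_F(t) + (\mathcal{L}_1^{-1}\mathcal{L}_2\otimes I_N)\mbf{p}_L(t)\to 0$, which is the containment objective.
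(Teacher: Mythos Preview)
Your proposal is correct and follows essentially the same approach as the paper: two applications of Lemma~\ref{theorem_unified}, first to the velocity observer \eqref{filter_observer:nonlinear} with $\alpha=1$, $\sigma_i=1$, then to the shifted position/filter cascade (with $\bar p_i=p_i-v_dt$, $\bar\eta_{i,k}=\eta_{i,k}-v_dt$) cast as \eqref{filter_leader_lemma}--\eqref{gains_filter} with $\alpha=1$, $\sigma_i=2$, using hypothesis~(i) to get boundedness and then hypothesis~(ii) to obtain $e_i\to 0$ and conclude via item~(ii) of the lemma. Your handling of the apparent circularity in bounding $v_{r_i}$, $\dot v_{r_i}$ before invoking hypothesis~(ii) mirrors the paper's argument exactly.
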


            \begin{proof}
           First, it can be verified that the distributed observer \eqref{filter_observer:nonlinear} with $\hat{v}_i = v_i$, $i\in\mathfrak{L}$, can be rewritten as \eqref{filter:lemma}-\eqref{gains_filter} with $\alpha = 1$ and $\sigma_i = 1$, for $i\in\mathfrak{F}$. Therefore, Lemma~\ref{theorem_unified} can be used to show that $\dot{\hat{\bs{v}}}_F$, $\hat{\bs{v}}_F+(\mathcal{L}_1^{-1}\mathcal{L}_2\otimes I_N)\bs{v}_L$ are uniformly bounded, $\dot{\hat{\bs{v}}}_F(t)\to 0$ and $\hat{\bs{v}}_F(t) + (\mathcal{L}_1^{-1}\mathcal{L}_2\otimes I_N)\bs{v}_L(t)\to 0$, in particular $\hat{v}_i(t)- v_d \to 0$ $i\in\mathfrak{F}$, under Assumption~\ref{assumption_graph}, Assumption~\ref{AssumptionCommunicationBlackouts} and Assumption~\ref{asum:leader:motion}.

                Next, define the following variables: $\bar{p}_i = (p_i - v_{d} t)$, for $i\in\mathcal{N}$, $\bar{\eta}_{i,1} = (\eta_{i,1} - v_{d} t)$, and $\bar{\eta}_{i,2} = (\eta_{i,2} - v_{d} t)$, for $i\in\mathfrak{F}$. The dynamics of these variables can be shown, after some computations using \eqref{reference_velocity_nonlinear}-\eqref{filter_dynamic:nonlinear}, to satisfy
                \begin{equation}\label{filter_closed_nl1}\begin{array}{lcl}
                \dot{\bar{p}}_i = v_i - v_d, && i\in\mathfrak{L},\\
                \dot{\bar{p}}_i = -\lambda_i (\bar{p}_i - \bar{\eta}_{i,1}) + \hat{v}_i - v_d + e_i, && i\in\mathfrak{F},\end{array}
                \end{equation}
                \begin{equation}\label{filter_closed_nl2}
                \begin{array}{l}
                \begin{bmatrix}\dot{\bar{\eta}}_{i,1}\\ \dot{\bar{\eta}}_{i,2}\end{bmatrix}=\begin{bmatrix} -k_{p_i}I_N&k_{p_i}I_N\\0_N &-k_{d_i}I_N\end{bmatrix} \begin{bmatrix} \bar{\eta}_{i,1} \\ \bar{\eta}_{i,2}\end{bmatrix}\\
                 ~~~~~~~~~~~+ \begin{bmatrix}\hat{v}_i - v_{d} \\ \frac{k_{d_i}}{\kappa_i}\textstyle\sum_{j=1}^n a_{ij} \bar{p}_{j}(t_{k_{ij}^{\mathrm{m}}(t)})\big) + \bar{\Theta}_{i}\end{bmatrix}
                \end{array}
                \end{equation}
            for $i\in\mathfrak{F}$, where $\bar{\Theta}_i:= k_{d_i}\Theta_{i,1} + \hat{v}_i - v_{d}$, with $\Theta_{i,1}$ being given in \eqref{Upsilon_thm_dynamic}. It can also be verified that \eqref{filter_closed_nl1}-\eqref{filter_closed_nl2} is equivalent to \eqref{filter:lemma}-\eqref{gains_filter} with $\alpha = 1$ and $\sigma_i = 2$, for $i\in\mathfrak{F}$. From item $i)$ in the theorem, we know that there exists an input $\Gamma_i$, $i\in\mathfrak{F}$, such that $e_i$ is uniformly bounded, $i\in\mathfrak{F}$. Also, Assumption~\ref{asum:leader:motion} implies that $\dot{\bar{p}}_i$ is uniformly bounded and $\dot{\bar{p}}_i(t)\to 0$, $i\in\mathfrak{L}$. Note that we have shown above that $\dot{\hat{v}}_i$, $\hat{v}_i-v_d$, $i\in\mathfrak{F}$, are uniformly bounded and asymptotically converge to zero. This with $t_{k_{ij}^{\mathrm{m}}(t)} \to +\infty$ and $(t - k_{ij}^{\mathrm{m}}(t) T)\leq T^*$, by Assumption~\ref{AssumptionCommunicationBlackouts}, one can deduce that $\bar{\Theta}_{i}$ is uniformly bounded and ${\bar\Theta}_{i}(t)\to 0$, $i\in\mathfrak{F}$. Invoking Lemma~\ref{theorem_unified}, we can show that  $\dot{\bar{\eta}}_{i,1}$, $\dot{\bar{\eta}}_{i,2}$, $\dot{\bar{p}}_i$, $i\in\mathfrak{F}$, and $\bar{\mbf{p}}_F + (\mathcal{L}_1^{-1}\mathcal{L}_2\otimes \mbf{I}_N)\bar{\mbf{p}}_L$ are uniformly bounded. Consequently, $v_{r_i}$ and $\dot{v}_{r_i}$, $i\in\mathfrak{F}$, are uniformly bounded. Then, item $ii)$ in the theorem leads us to conclude that $e_i(t) \to 0$, $i\in\mathfrak{F}$. Invoking Lemma~\ref{theorem_unified} again, we can show that $v_i(t)\to v_d$, $i\in\mathfrak{F}$, and $\mbf{p}_F(t) +(\mathcal{L}_1^{-1}\mathcal{L}_2\otimes I_N)\mbf{p}_L(t)\to 0$.
 \end{proof}
            \begin{remark}
            Similarly to Corollary~\ref{theorem_stationary}, the result in Theorem~\ref{theorem_nl} can be simplified in the case where the steady state velocity of the leaders $q_d$ is available to all the followers, or is null, by letting $\hat{v}_i = v_d$ for all $i\in\mathfrak{F}$.
            \end{remark}

            Theorem~\ref{theorem_nl} shows that the containment control problem of the general class of nonlinear systems \eqref{model_nl} can be achieved in the presence of unknown communication blackout intervals that can be arbitrarily large.  Note that the reference velocity $v_{r_i}$ in \eqref{reference_velocity_nonlinear} depends on the states of the nonlinear system \eqref{model_nl}, which introduces some coupling between the dynamics of this reference velocity and the tracking error $e_i = (v_i - v_{r_i})$. Theorem~\ref{theorem_nl} takes into account such coupling and provides sufficient conditions on the control input $\Gamma_i$, $i\in\mathfrak{F}$, such that tracking the reference velocity \eqref{reference_velocity_nonlinear}-\eqref{filter_observer:nonlinear} leads to our control objectives. Keeping in mind that $v_{r_i}$, $\dot{v}_{r_i}$ are well defined continuous functions of time and available for feedback, the design of such control input $\Gamma_i$ would be possible following the various approaches dealing with tracking control design for nonlinear systems. It is worthwhile mentioning that a similar approach has been recently considered in \cite{abdessameud2015synchronization} to address the synchronization problem of multi-agent system \eqref{model_nl} under similar assumptions on the communication constraints. The distributed design of the reference velocity  in Theorem~\ref{theorem_nl} extends our results in  \cite{abdessameud2015synchronization} to the case of multiple dynamic leaders with time-varying accelerations. Moreover, the algorithm proposed in Theorem~\ref{theorem_nl} is structurally simpler (as compared to the one proposed in \cite{abdessameud2015synchronization}) and, in addition, our control objectives can be attained without imposing any conditions on the control gains.% or on the maximum blackout interval $T^*$ that can be unknown.

            To illustrate the application of Theorem~\ref{theorem_nl}, we consider the following dynamics of nonlinear systems
            %following example of nonlinear systems. %\vspace{0.1 in} %\noindent \textbf{Example 1:}
            \begin{equation}\label{nolinearity:example}
            F_{i}(p_i, v_i, \Gamma_i) := \bar{F}_{i}(p_i, v_i) + \Gamma_i,\qquad \mbox{for}~i\in\mathfrak{F},
            \end{equation}
            where $\bar{F}_i: \mathbb{R}^N\times \mathbb{R}^N \to \mathbb{R}^N$, is a locally Lipschitz continuous function satisfying the following assumption.
            \begin{assumption}\label{assm:nonlinearity}
            For all $i\in\mathfrak{F}$, there exists known class-$\mathcal{K}_{\infty}$ functions $\delta^p_{f_i}$ and $\delta^v_{f_i}$ such that
            \begin{equation}
               |\bar{F}_{i}(p_i, v_i)| \leq \delta^p_{f_i}(|p_i|) + \delta^v_{f_i}(|v_i|).
            \end{equation}
            \end{assumption}
            For each follower in \eqref{nolinearity:example}, consider the following control input
            \begin{eqnarray}\label{input:nl:example:vs}
            \Gamma_i &=& -k_{r_i} e_i + \dot{v}_{r_i} - \bar{\Gamma}_{i},\\
             \label{input:nl:2}\bar{\Gamma}_i &=& \left\{\begin{array}{ll}
            \frac{e_i}{|e_i|}(\delta^p_{f_i}(|p_i|) + \delta^v_{f_i}(|v_i|)), \quad& \mbox{if}~e_i \neq 0,\\
            0,          \quad&  \mbox{if}~e_i = 0\end{array}\right.
            \end{eqnarray}
            where $k_{r_i}>0$, $e_i = (v_i - v_{r_i})$, $v_{r_i}$ is given in \eqref{reference_velocity_nonlinear}-\eqref{filter_observer:nonlinear}. Note that the control algorithm \eqref{input:nl:example:vs} is a classical variable structure controller that satisfies the conditions in Theorem~\ref{theorem_nl} for a given $v_{r_i}$ and $\dot{v}_{r_i}$. In fact, using the Lyapunov function $V_i=0.5 e_i^\top e_i$ whose derivative evaluated along the closed loop dynamics satisfies $\dot{V}_i \leq -2k_{r_i} V_i$, one can show that $e_i$ is uniformly bounded and $e_i(t)\to 0$ for all $i\in\mathfrak{F}$. Therefore, the following corollary of Theorem~\ref{theorem_nl} holds.
             \begin{corollary}\label{corollary_nl}
                Consider multi-agent system \eqref{model_nl} with \eqref{nolinearity:example} and suppose that Assumptions~\ref{assumption_graph}-\ref{assm:nonlinearity} hold. For each follower, let the control input be given in \eqref{input:nl:example:vs}-\eqref{input:nl:2} with \eqref{reference_velocity_nonlinear}-\eqref{filter_observer:nonlinear}. Then,  $\bs{v}_F$, $\mbf{p}_F + (\mathcal{L}_1^{-1}\mathcal{L}_2\otimes I_N)\mbf{p}_L$ are uniformly bounded, $v_i(t)\to v_d$, for $i\in\mathfrak{F}$, and $\mbf{p}_F(t) + (\mathcal{L}_1^{-1}\mathcal{L}_2\otimes I_N)\mbf{p}_L(t)\to 0$, for arbitrary initial conditions.% and for any $T^*$ in Assumption~\ref{AssumptionCommunicationBlackouts} that can be arbitrarily large.
                \hfill$\blacksquare$
            \end{corollary}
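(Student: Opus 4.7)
The plan is to reduce the statement to an application of Theorem~\ref{theorem_nl} by verifying that the variable structure controller \eqref{input:nl:example:vs}-\eqref{input:nl:2} guarantees items $i)$ and $ii)$ of that theorem, namely uniform boundedness of the velocity tracking error $e_i := v_i - v_{r_i}$ for every $i\in\mathfrak{F}$, and its asymptotic convergence to zero whenever $v_{r_i}$ and $\dot{v}_{r_i}$ are uniformly bounded. Since all the structural hypotheses required by Theorem~\ref{theorem_nl} (Assumptions~\ref{assumption_graph}--\ref{asum:leader:motion}) are directly assumed in the Corollary, the remaining task is purely local to each follower and boils down to a standard sliding-mode style estimate.

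First, I would substitute \eqref{nolinearity:example} and the control law \eqref{input:nl:example:vs} into \eqref{model_nl} to obtain the closed-loop error dynamics
\begin{equation*}
\dot{e}_i = \dot{v}_i - \dot{v}_{r_i} = \bar{F}_i(p_i,v_i) + \Gamma_i - \dot{v}_{r_i} = -k_{r_i}\,e_i + \bar{F}_i(p_i,v_i) - \bar{\Gamma}_i,\qquad i\in\mathfrak{F}.
\end{equation*}
Following the hint preceding the Corollary, I would then use the candidate Lyapunov function $V_i = \tfrac{1}{2}\,e_i^\top e_i$. For $e_i\neq 0$, the definition of $\bar{\Gamma}_i$ in \eqref{input:nl:2} gives $e_i^\top\bar{\Gamma}_i = |e_i|\bigl(\delta^p_{f_i}(|p_i|)+\delta^v_{f_i}(|v_i|)\bigr)$, while Cauchy--Schwarz together with Assumption~\ref{assm:nonlinearity} yields $e_i^\top\bar{F}_i(p_i,v_i) \leq |e_i|\bigl(\delta^p_{f_i}(|p_i|)+\delta^v_{f_i}(|v_i|)\bigr)$. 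Combining these two inequalities,
\begin{equation*}
\dot{V}_i = -k_{r_i}|e_i|^2 + e_i^\top\bar{F}_i(p_i,v_i) - e_i^\top\bar{\Gamma}_i \;\leq\; -k_{r_i}|e_i|^2 = -2k_{r_i} V_i,
\end{equation*}
which holds for almost every $t$ and directly implies $V_i(t)\leq V_i(0)\,e^{-2k_{r_i}t}$; hence $e_i$ is uniformly bounded and $e_i(t)\to 0$ exponentially for every $i\in\mathfrak{F}$, independently of whether $v_{r_i}$ and $\dot{v}_{r_i}$ are bounded. In particular, both hypotheses $i)$ and $ii)$ of Theorem~\ref{theorem_nl} are satisfied.

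With these two facts established, the conclusion of Theorem~\ref{theorem_nl} applies verbatim and delivers uniform boundedness of $\bs{v}_F$ and of $\mbf{p}_F + (\mathcal{L}_1^{-1}\mathcal{L}_2\otimes I_N)\mbf{p}_L$, convergence $v_i(t)\to v_d$ for $i\in\mathfrak{F}$, and $\mbf{p}_F(t)+(\mathcal{L}_1^{-1}\mathcal{L}_2\otimes I_N)\mbf{p}_L(t)\to 0$, which is exactly the statement of the Corollary. The only real subtlety in the argument is the discontinuity of $\bar{\Gamma}_i$ at $e_i = 0$: a fully rigorous treatment should invoke Filippov solutions (or simply observe that the bound on $\dot{V}_i$ holds a.e.\ and that $V_i$ is nonincreasing across the switching surface), but this is entirely standard for variable structure controllers and does not interact with the small-gain machinery packaged inside Theorem~\ref{theorem_nl}. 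I do not anticipate any other obstacle, since all the nontrivial coupling between the follower dynamics, the reference-velocity filters \eqref{filter_dynamic:nonlinear}--\eqref{filter_observer:nonlinear}, the communication constraints, and the graph structure has already been absorbed into the statement of Theorem~\ref{theorem_nl}.
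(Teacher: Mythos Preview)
Your proof is correct and follows essentially the same route as the paper: verify items $i)$ and $ii)$ of Theorem~\ref{theorem_nl} via the Lyapunov function $V_i=\tfrac{1}{2}e_i^\top e_i$, obtain $\dot V_i\le -2k_{r_i}V_i$, and then invoke Theorem~\ref{theorem_nl}. Your remark on Filippov solutions for the discontinuous $\bar{\Gamma}_i$ is a reasonable addendum that the paper itself leaves implicit.
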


            \begin{remark}In \cite{wang:X:ISS:2014}, an ISS method has been proposed to solve the containment control problem for nonlinear multi-agent system \eqref{model_nl}, with \eqref{nolinearity:example} and Assumption~\ref{assm:nonlinearity}, where only the case of stationary leaders has been considered using continuous-time communication between agents and no communication delays. The result in \cite{wang:X:ISS:2014} is achieved under sufficient conditions given in terms of a number of inequalities that depends on the number of the directed paths and cycles in the directed interconnection graph between agents. Besides the fact that we consider communication constrains and non-stationary leaders, the results in Corollary~\ref{corollary_nl} do not rely on any centralized information on $\mathcal{G}$, and hold in the presence of unknown bounded blackout intervals that can be large.
            \end{remark}

           \begin{remark} The result of Theorem~\ref{theorem_nl} can be applied to various other nonlinear second order systems, including mechanical systems, which need not to be identical. \end{remark} %This result can also be applied in the case where the follower agents are governed by completely different second-order dynamics provided that the design conditions in Theorem~\ref{theorem_nl} are satisfied for each individual follower.

\section{Containment Control of Oscillator Systems}

            Consider a multi-agent system where the dynamics of each agent are described by
        \begin{equation}\label{model_harmonic}
        \dot{p}_i = v_i, \qquad \dot{v}_i = S_1 p_i + S_2 v_i +  \Gamma_i, \quad i\in\mathcal{N},
        \end{equation}
            where  $\Gamma_i$ is the control input and $S_1\in\mathbb{R}^{N\times N}$, $S_2\in\mathbb{R}^{N\times N}$ are known matrices. Let $\Gamma_i\equiv 0$, for $i\in\mathfrak{L}$, and consider the following assumption.
        \begin{assumption}\label{assumption_harmonic} All eigenvalues of $S$ are pure imaginary and semi-simple, with $S:= \begin{bmatrix} 0_N& I_N\\ S_1& S_2\end{bmatrix}$.
        \end{assumption}

            Assumption~\ref{assumption_harmonic} implies that all agents oscillate with some frequency and some amplitude defined by their initial states if subject to no input. It is clear that $S_1 = S_2 = 0_N$ corresponds to the case of double integrators studied in Section~\ref{sec:linear:systems}. Also, if $S_1 = \mbox{diag}\{-s_{1,1}, \ldots, -s_{1,N}\}$, $s_{1,j}>0$ for $j=1,\ldots, N$, $S_2 = 0_N$, the dynamic system \eqref{model_harmonic} describes a group of harmonic oscillators studied in \cite{ren2008synchronization, su2009synchronization, zhou2012synchronization, xu2015containment}. In this section, we design the input of the follower agents such that their final trajectories are driven to the convex hull spanned by the trajectories of the leaders  in the presence of communication constraints.

            Let the control input in \eqref{model_harmonic} be as follows
        \begin{align}\label{input:harmonic}
        \Gamma_i &= - (k_{d_i}I_N+S_2) (v_i - \hat{v}_i) - k_{p_i}(p_i - \psi_{i,1})+ 2 \sigma_i \\
        \label{dot_sigma_input_harmonic}\dot{\sigma}_i&= S_1 (v_i - \hat{v}_i) + S_2 \sigma_i - k_{d_i}\sigma_i - k_{p_i}(\hat{v}_i - \psi_{i,2})\\
        \label{dot_hat_v_input_harmonic}\dot{\hat{v}}_i &= S_1 p_i + S_2 \hat{v}_i + \sigma_i
        \end{align}
            for $i\in\mathfrak{F}$, where $k_{p_i}>0$, $k_{d_i}>0$, and the vector $\psi_{i}:= (\psi_{i,1}^{\top}, \psi_{i,2}^{\top})^\top \in \mathbb{R}^{2N\times 2N}$ satisfies
        \begin{equation}\label{psi_input_harmonic}
        \psi_i %:= \begin{bmatrix}\psi_{i,1}\\ \psi_{i,2}\end{bmatrix}
        = \frac{1}{\kappa_i}\sum_{j=1}^n a_{ij} e^{S(t-t_{k_{ij}^{\mathrm{m}}(t)})}\begin{bmatrix}p_{j}(t_{k_{ij}^{\mathrm{m}}(t)}) \\ \hat{v}_j(t_{k_{ij}^{\mathrm{m}}(t)})\end{bmatrix}
            \end{equation}
            with $\hat{v}_i \equiv v_i$, for $i\in\mathfrak{L}$. The vector $\psi_i$ in \eqref{psi_input_harmonic} is defined based on an estimate, or a prediction, of the current positions of neighboring agents obtained using the most recent information available to agent $i$ at $t$. Also, similarly to the control algorithm \eqref{input:dynamic:1}-\eqref{psi_input_dynamic1}, the vectors $\psi_{i,1}$ and $\hat{v}_{i}$ can be interpreted as a reference position and reference velocity, respectively, for each follower designed to achieve our objectives. However, the dynamic system \eqref{dot_sigma_input_harmonic}-\eqref{dot_hat_v_input_harmonic}, $i\in\mathfrak{F}$, with $\hat{v}_i \equiv v_i$, $i\in\mathfrak{L}$, cannot be seen as an independent distributed observer since it relies on the positions of the agents in \eqref{dot_hat_v_input_harmonic} and \eqref{psi_input_harmonic}. In fact, the control algorithm \eqref{input:harmonic}-\eqref{psi_input_harmonic} does not reduce to \eqref{input:dynamic:1}-\eqref{psi_input_dynamic1} in the case of $S_1 = S_2 = 0_N$.

            %The control input \eqref{input:harmonic}-\eqref{psi_input_harmonic} can be modified, following similar lines as in Section~\ref{sec:linear:systems}, to ensure a continuous-time control action.
            In the case where the velocity vectors $v_i$ for $i \in\mathfrak{F}$ are not available for feedback, we implement the following control law for each follower
        \begin{align}
        \Gamma_i &= - (k_{d_i}I_N+S_2) \big(k_{\phi_i}(\phi_i + p_i) - \hat{v}_i\big) - k_{p_i}(p_i - \psi_{i,1})+ 2\sigma_i \nonumber\\
        \label{input:harmonic:withoutvelocity}\dot{\sigma}_i&= - (k_{d_i}I_N-S_2)\sigma_i + S_1 \big(k_{\phi_i}(\phi_i + p_i) - \hat{v}_i\big) - k_{p_i}(\hat{v}_i - \psi_{i,2})\nonumber\\
        \dot{\phi}_i &= -(k_{\phi_i}I_N - S_2)(p_i + \phi_i) + \frac{1}{k_{\phi_i}}(\Gamma_i + S_1 p_i)
        \end{align}
        with $k_{\phi_i}>0$ and $\hat{v}_i$, $\psi_i:= (\psi_{i,1}^{\top}, \psi_{i,2}^{\top})^\top $ being defined in \eqref{dot_hat_v_input_harmonic} and \eqref{psi_input_harmonic}, respectively.
        \begin{theorem}\label{theorem_harmonic}
                Consider the network of $n$ systems described by~\eqref{model_harmonic} and suppose Assumptions~\ref{assumption_graph}-\ref{assump_common_sampling} and Assumption~\ref{assumption_harmonic} hold. For each $i\in\mathfrak{F}$, consider the control input \eqref{input:harmonic}-\eqref{psi_input_harmonic}. Pick the gains $k_{p_i}$ and $k_{d_i}$ such that the roots of $x^2+k_{d_i} x+k_{p_i}=0$ are real. Then,
                all signals are
                %$\dot{\bs{v}}_F$, $\bs{v}_F$, $\mbf{p}_F + (\mathcal{L}_1^{-1}\mathcal{L}_2\otimes I_N)\bs{v}_L$ are
                uniformly bounded and $\bs{v}_F(t) + (\mathcal{L}_1^{-1}\mathcal{L}_2\otimes I_N)\bs{v}_L(t)\to 0$, $\mbf{p}_F(t) + (\mathcal{L}_1^{-1}\mathcal{L}_2\otimes I_N)\mbf{p}_L(t)\to 0$, for arbitrary initial conditions.% and for any $T^*$ in Assumption~\ref{AssumptionCommunicationBlackouts} that can be arbitrarily large.
                The same results also hold in the case where the control algorithm \eqref{input:harmonic:withoutvelocity} with \eqref{dot_hat_v_input_harmonic}-\eqref{psi_input_harmonic} is used provided that the matrix $(-k_{\phi_i}I_N + S_2)$ is stable.
                \hfill$\blacksquare$
        \end{theorem}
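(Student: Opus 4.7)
The plan is to mirror the proof of Theorem~\ref{theorem_nonstationary}, using a change of variables tailored to the oscillator structure to absorb the time-varying leader trajectories, and then to invoke Lemma~\ref{theorem_unified}. First, I would stack the states into $\hat{z}_i := (p_i^\top, \hat{v}_i^\top)^\top$ for all $i\in\mathcal{N}$, with the convention $\hat{v}_i \equiv v_i$ for $i\in\mathfrak{L}$. A direct computation from \eqref{model_harmonic} together with \eqref{input:harmonic}--\eqref{dot_hat_v_input_harmonic} gives $\dot{\hat{z}}_i = S\hat{z}_i$ for leaders and, for $i\in\mathfrak{F}$,
\begin{equation*}
\dot{\hat{z}}_i \,=\, S\hat{z}_i + \begin{bmatrix}\tilde{v}_i\\ \sigma_i\end{bmatrix},\qquad \dot{e}_i \,=\, (S - k_{d_i}I_{2N})\,e_i \,-\, k_{p_i}\,\mu_i,
\end{equation*}
where $\tilde{v}_i := v_i - \hat{v}_i$, $e_i := (\tilde{v}_i^\top,\sigma_i^\top)^\top$, and $\mu_i := \hat{z}_i - \psi_i$. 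By Assumption~\ref{assumption_harmonic} the spectrum of $S$ is purely imaginary and semi-simple, so $e^{\pm St}$ is uniformly bounded and every eigenvalue of $S - k_{d_i}I_{2N}$ has real part $-k_{d_i}<0$.

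Next, I would apply the change of variable $\bar{z}_i := e^{-St}\hat{z}_i$. In these coordinates the leaders are constant ($\dot{\bar{z}}_i = 0$), while for followers $\dot{\bar{z}}_i = e^{-St}e_i$. The key observation is that the prediction term in \eqref{psi_input_harmonic} transforms to
\begin{equation*}
\bar{\psi}_i \,=\, e^{-St}\psi_i \,=\, \frac{1}{\kappa_i}\sum_{j=1}^n a_{ij}\,\bar{z}_j\!\bigl(t_{k_{ij}^{\mathrm{m}}(t)}\bigr),
\end{equation*}
which is exactly a weighted average of the most recent neighbor samples, matching the input structure \eqref{input_filter_lemma} of Lemma~\ref{theorem_unified}. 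The hypothesis that the roots of $x^2 + k_{d_i}x + k_{p_i}=0$ are real, say $-\lambda_{i,1}$ and $-\lambda_{i,2}$, then lets me factor the coupled $(\bar{z}_i, e_i)$ subsystem into a chain of two first-order filters of the bidiagonal form \eqref{gains_filter}, driven by perturbations that depend on the sampling offset $t - t_{k_{ij}^{\mathrm{m}}(t)} \le T^*$ and on uniformly bounded functions of the (now constant) transformed leader states.

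I would then verify the hypotheses of Lemma~\ref{theorem_unified}: the perturbations are uniformly bounded and, as $t\to+\infty$, they vanish because the transformed leader states are constant and the sampled data converges to the true trajectory up to the oscillator-consistent prediction already built into $\psi_i$. Lemma~\ref{theorem_unified} then yields boundedness of all signals together with $\bar{\mbf{z}}_F(t) + (\mathcal{L}_1^{-1}\mathcal{L}_2 \otimes I_{2N})\bar{\mbf{z}}_L(t) \to 0$. Premultiplying by the uniformly bounded block-diagonal operator $I\otimes e^{St}$, which commutes with $\mathcal{L}_1^{-1}\mathcal{L}_2\otimes I_{2N}$, and reading off the first $N$ and last $N$ components of each $\hat{z}_i$ recovers both $\mbf{p}_F(t) + (\mathcal{L}_1^{-1}\mathcal{L}_2\otimes I_N)\mbf{p}_L(t)\to 0$ and $\bs{v}_F(t) + (\mathcal{L}_1^{-1}\mathcal{L}_2\otimes I_N)\bs{v}_L(t)\to 0$, the latter using the already-established $e_i(t)\to 0$ to identify $\hat{v}_i$ with $v_i$ asymptotically. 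For the partial-state-feedback variant \eqref{input:harmonic:withoutvelocity}, I would introduce the auxiliary error $\tilde{\phi}_i := k_{\phi_i}(\phi_i + p_i) - v_i$; a direct computation yields $\dot{\tilde{\phi}}_i = (-k_{\phi_i}I_N + S_2)\tilde{\phi}_i$, so that $\tilde{\phi}_i$ decays exponentially precisely when $(-k_{\phi_i}I_N + S_2)$ is Hurwitz, producing only a vanishing additive perturbation in the $\dot v_i$ equation that the same Lemma~\ref{theorem_unified} argument absorbs.

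The main obstacle I anticipate is the chain-decomposition step: rewriting the transformed $(\bar{z}_i, e_i)$ dynamics in the precise bidiagonal form \eqref{gains_filter} required by Lemma~\ref{theorem_unified}, and verifying that premultiplication by the time-varying but uniformly bounded factor $e^{-St}$ does not destroy either uniform boundedness or asymptotic vanishing of the residual forcing terms. The real-roots condition on the control gains is exactly what enables this decomposition, producing the two distinct positive scalars that play the role of $h_{i,1},h_{i,2}$; without it the subsystem would carry oscillatory modes incompatible with the filter structure of Lemma~\ref{theorem_unified}.
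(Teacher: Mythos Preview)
Your proposal is correct and follows essentially the same route as the paper's proof: stack $(p_i,\hat v_i)$ into $x_i$, derive the coupled $(x_i,z_i)$ dynamics with $z_i=(\tilde v_i^\top,\sigma_i^\top)^\top$, apply the rotating-frame change of variables $\bar x_i=e^{-St}x_i$ (which freezes the leaders and reduces the predictor $\psi_i$ to the sampled-average form \eqref{input_filter_lemma}), then use the real-roots condition to introduce $\bar\xi_i=\tfrac{1}{\lambda_i}\bar z_i+\bar x_i$ and invoke Lemma~\ref{theorem_unified}. The partial-state-feedback case via the exponentially decaying $\tilde\phi_i$ is also handled identically, your definition differing from the paper's only by the harmless scalar factor $k_{\phi_i}$.
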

\begin{proof}
                Let $x_i = \big(p_i^\top, \hat{v}_i^\top\big)^\top$, $i\in\mathcal{N}$,  $z_i =\big((v_i - \hat{v}_i)^\top, \sigma_i^\top\big)^\top$, $i\in\mathfrak{F}$, where $\hat{v}_i \equiv v_i$ for $i\in\mathfrak{L}$. Then, one can verify, from \eqref{model_harmonic} with \eqref{input:harmonic}-\eqref{psi_input_harmonic}, that $\dot{x}_i = S x_i$, $i\in\mathfrak{L}$, and
            \begin{equation}\label{closed_pf_thm_harmonic_1}\begin{array}{lcl}
            \dot{x}_i &=& S x_i + z_i \\
            \dot{z}_i &=& S z_i -k_{d_i}z_i - k_{p_i}(x_i - \psi_i)+ \bs{\epsilon}_i
            \end{array}, \quad i\in\mathfrak{F}\end{equation}
                where $\psi_i$ is defined in \eqref{psi_input_harmonic} and $\bs{\epsilon}_i \equiv 0$. Similarly to the proof of Theorem~\ref{theorem_nonstationary}, we suppose that $\bs{\epsilon}_i$ is uniformly bounded and $\bs{\epsilon}_i(t)\to 0$ for analysis purposes. In view of Assumption~\ref{assumption_harmonic}, we consider the following change of variables $\bar{x}_i(t) = e^{-S(t-t_0)} x_i(t)$, for $i\in\mathcal{N}$ and $t > t_0\geq 0$, and define similarly $\bar{z}_i(t)$, $\bar{\psi}_i(t)$, $\bar{\bs{\epsilon}}_i(t)$, for $i\in\mathfrak{F}$. This leads to
            \begin{equation}\label{dot_z:1:_pf_thm_harmonic}\begin{array}{lcl}
            \dot{\bar{x}}_i &=& \bar{z}_i\\
            \dot{\bar{z}}_i &=& -k_{d_i} \bar{z}_i - k_{p_i}\big(\bar{x}_i - \bar{\psi}_i \big)+ \bar{\bs{\epsilon}}_i
            \end{array}, \quad i\in\mathfrak{F}
            \end{equation}
                with \vspace{-0.1 in}
            \begin{equation*}  \begin{array}{lcl}
            \bar{\psi}_i &=&
                        \frac{1}{\kappa_i}\sum_{j=1}^n a_{ij} e^{-S(t-t_0)}e^{S(t-t_{k_{ij}^{\mathrm{m}}(t)})}x_{j}(t_{k_{ij}^{\mathrm{m}}(t)}) \\
            &=&   \frac{1}{\kappa_i}\sum_{j=1}^n a_{ij}\bar{x}_j(t_{k_{ij}^{\mathrm{m}}(t)}).
          \end{array}\end{equation*}

                Now, to use the result of Lemma~\ref{theorem_unified}, we consider the change of variables $\bar{\xi}_i = \frac{1}{\lambda_i}\bar{z}_i + \bar{x}_i$, where $-\lambda_i <0$ is a real root of the characteristic equation given in the theorem. Then, one can show, using similar steps as in \eqref{dot_xi_pf_thm1}, that
            \begin{equation}\label{closed_pf_thm_harmonic}\begin{array}{lcl}
            \dot{\bar{x}}_i = 0&\qquad& i\in\mathfrak{L}\\
            \dot{\bar{x}}_i = -\lambda_i (\bar{x}_i - \bar{\xi}_i)&\qquad& i\in\mathfrak{F}\\
            \end{array}\end{equation} with
            \begin{equation}\label{dot_xi_pf_thm_harmonic}
            \dot{\bar{\xi}}_i = \frac{-k_{p_i}}{\lambda_i}\big(\bar{\xi}_i - \frac{1}{\kappa_i}\sum_{j=1}^n a_{ij}\bar{x}_j(t_{k_{ij}^{\mathrm{m}}(t)}) \big) + \frac{1}{\lambda_i}\bar{\bs{\epsilon}}_i
            \end{equation}
                for $i\in\mathfrak{F}$. Then, since the dynamic system  \eqref{closed_pf_thm_harmonic}-\eqref{dot_xi_pf_thm_harmonic} can be rewritten as \eqref{filter_leader_lemma}-\eqref{gains_filter} with $\alpha = 1$ and $\sigma_i = 1$, $i\in\mathfrak{F}$,  Lemma~\ref{theorem_unified} can be used to show that $\dot{\bar{\bs{\xi}}}_F$, $\dot{\bar{\bs{x}}}_F$, $\bar{\bs{x}}_F + (\mathcal{L}_1^{-1}\mathcal{L}_2\otimes I_N)\bar{\bs{x}}_L$ are uniformly bounded and asymptotically converge to zero under the assumptions of the theorem. This implies that $\dot{\bar{\bs{z}}}_F$, $\bar{\bs{z}}_F$ are uniformly bounded and $\dot{\bar{\bs{z}}}_F(t)\to 0$, $\bar{\bs{z}}_F(t)\to 0$, in view of \eqref{dot_z:1:_pf_thm_harmonic}. This, with Assumption~\ref{assumption_harmonic}, leads to the conclusion that $\dot{\bs{z}}_F$, $\bs{z}_F$, $\dot{\bs{x}}_F$, $\bs{x}_F + (\mathcal{L}_1^{-1}\mathcal{L}_2\otimes I_N)\bs{x}_L$ are uniformly bounded and asymptotically converge to zero, which leads to the result of the theorem from the definition of $\bs{z}_F$ and $\bs{x}_F$. Also, $\bs{x}_F$ is uniformly bounded since $\bs{x}_L$ is uniformly bounded by Assumption~\ref{assumption_harmonic}.

                In the case where the control law \eqref{input:harmonic:withoutvelocity} is used, we define the error vector
                $\tilde{\phi}_i := \phi_i + p_i - \frac{1}{k_{\phi_i}}v_i$. Using \eqref{model_harmonic} with \eqref{input:harmonic:withoutvelocity}, one can show that $$\dot{\tilde{\phi}}_i = - (k_{\phi_i} I_N - S_2)\tilde{\phi}_i,$$
                for $i\in\mathfrak{F}$. This implies that $\tilde{\phi}_i(t)\to 0$ exponentially if $- (k_{\phi_i} I_N - S_2)$ is stable. Also, by noticing that $v_i = k_{\phi_i}(\phi_i + p_i -\tilde{\phi}_i)$, and using the same variables $x_i$, $z_i$, defined above, the closed loop dynamics \eqref{model_harmonic} with \eqref{input:harmonic:withoutvelocity}, \eqref{dot_hat_v_input_harmonic}-\eqref{psi_input_harmonic}, can be written as in \eqref{closed_pf_thm_harmonic}-\eqref{dot_xi_pf_thm_harmonic} with $\bs{\epsilon}_i = (\bs{\epsilon}_{i,1}^\top, \bs{\epsilon}_{i,2}^\top)^\top$, ~ $\bs{\epsilon}_{i,1}:=-k_{\phi_i}(k_{d_i}I_N +S_2) \tilde{\phi}_i$, and $\bs{\epsilon}_{i,2}:= k_{\phi_i}S_1\tilde{\phi}_i$. The same above arguments can then be used to prove the last part of the theorem.
%
%            Also, it can be verified that if matrix $\begin{bmatrix}S_1&S_2\end{bmatrix}$ contains some rows with all zero entries, Assumption~\ref{assumption_harmonic} may not be satisfied and $S$ might have some multiple zero eigenvalues. In this case, matrix $S$ can be rewritten, via a permutation, in a block diagonal form with two blocks, with the first block satisfies Assumption~\ref{assumption_harmonic} and the second block corresponds to the case of a double integrator. In this case, the analysis of the closed loop system \eqref{closed_pf_thm_harmonic_1} follows the same above steps and the proof of Theorem~\ref{theorem_nonstationary}.
\end{proof}
         
             Theorem~\ref{theorem_harmonic} provides a solution to the containment control problem of linear oscillators \eqref{model_harmonic} under relaxed assumptions on the communication process between agents (leading to  unknown communication blackout intervals that can be arbitrarily large) without imposing additional restrictions on the directed graph. This is guaranteed with a simple choice of the control gains despite the oscillatory motion of all agents. Further, the above result removes the requirements of velocity measurements for the followers.              It is clear that the distributed control algorithm in Theorem~\ref{theorem_harmonic} can be applied, with an obvious modification, to the case where the leaders' trajectories are oscillating, according to  \eqref{model_harmonic} with  Assumption~\ref{assumption_harmonic}, and the follower agents are governed by double integrator dynamics  \eqref{model} and/or the nonlinear second-order dynamics \eqref{model_nl}.

    \begin{remark}
        The synchronization problem of harmonic oscillators has been addressed in the literature in both the leaderless and leader-follower scenarios in the case of continuous-time communication \cite{ren2008synchronization, su2009synchronization} and in the case where communication is lost during some intervals of time \cite{zhou2012synchronization}. More recently, the containment control problem of coupled harmonic oscillators in directed networks has been studied in \cite{xu2015containment} using sampled-data protocols. Communication constraints, however, have not been considered in these papers and only the case of undirected interconnection graphs is addressed in \cite{zhou2012synchronization}.
    \end{remark}

\section{Simulation Results }\label{sec:simulation}

            In this section, we implement the proposed distributed containment control schemes for a network of ten systems, with  $\mathfrak{F} =\{1, \ldots, 6\}$ and $\mathfrak{L}= \{7, \dots, 10\}$, moving in the two dimensional space with $N=2$.
         The communication process between agents is described in Section~\ref{sec:comm} with the parameter $T^*$ in Assumption~\ref{AssumptionCommunicationBlackouts} being estimated to be smaller than or equal to $1.5~\sec$, and the interconnection graph $\mathcal{G}$ is directed and satisfies Assumption~\ref{assumption_graph} with
         \[
         \setlength{\arraycolsep}{2pt}
  \renewcommand{\arraystretch}{1}
  \mathcal{L}_1 = \begin{bmatrix}
    2 &0& -1& 0& 0& 0\\
            0 &2& -1& 0& 0& 0\\
            -1& 0& 4& -1& 0& 0\\
            0& 0& -1& 3& -1& -1\\
            0& 0& 0& 0& 3& -1\\
            0& 0& 0& 0& -1& 2\\
  \end{bmatrix}, \quad
         \mathcal{L}_2 = \begin{bmatrix}
            -1& 0& 0& 0\\
            0& -1& 0& 0\\
            0& -1& -1& 0\\
            0& 0& 0& 0\\
            0& -1& -1& 0\\
            0& 0& 0& -1\\
            \end{bmatrix}.
\]
        \noindent Also, we let $p_i = (p_{i_1}, p_{i_2})^\top\in\mathbb{R}^2$, $v_i = (v_{i_1}, v_{i_2})^\top\in\mathbb{R}^2$, and the vector $\tilde{\bs{v}}_{F}$ denote the containment error of the system defined as $\tilde{\bs{v}}_{F}=(\tilde{\bs{v}}_{F_1}^{\top}, \ldots, \tilde{\bs{v}}_{F_6}^\top)^\top := \bs{v}_F + (\mathcal{L}_1^{-1}\mathcal{L}_2\otimes \mbf{I}_2)\bs{v}_L$, where $\tilde{\bs{v}}_{F_i} = (\tilde{\bs{v}}_{F_{i_1}}, \tilde{\bs{v}}_{F_{i_2}})^{\top}\in\mathbb{R}^2$, $i\in\mathfrak{F}$.
\vspace{0.05 in}\\
          \emph{Example 1}:~ We consider the follower agents modeled by \eqref{model} and the motion of the leaders is described by $v_i(t) = v_i(0) + \mbf{1}_2(-1)^i\left(\cos(t)-0.2\sin(t)\right)e^{-0.2t}i$, for $i\in\mathfrak{L}$, with $v_i(0)$ defined such that $v_i(t)\to (1,0.1)^\top$, for $i\in\mathfrak{L}$, such that Assumption~\ref{asum:leader:motion} is verified. We implement the control algorithm \eqref{input:dynamic:1}-\eqref{psi_input_dynamic1} in Theorem~\ref{theorem_nonstationary}  with the control gains $k_{p_i}=k_{d_i}=L_{p_i}=4$, $i\in\mathfrak{F}$. In the case where the velocity vectors $v_i$, $i\in\mathfrak{F}$, are not available for feedback, we implement the control algorithm \eqref{input:dynamic:withoutvelocity}-\eqref{dot_phi_input_withoutvelocity} with \eqref{filter_observer_dynamic} and \eqref{dot_psi_input_dynamic} with the same above gains and $k_{\psi_i}= 1$. It can be verified that, in both cases, $k_{d_i}=2\sqrt{k_{p_i}}$ and hence the condition in Theorem~\ref{theorem_nonstationary} is satisfied.

          Fig.~\ref{fig:containment:error:variable:nonstationary} shows the convergence of the containment error vectors for all followers to zero in the case where control algorithm \eqref{input:dynamic:1}-\eqref{psi_input_dynamic1} of Theorem~\ref{theorem_nonstationary}, which implies that all followers converge to the convex hull spanned by the leaders using intermittent discrete-time communication, in the presence of time-varying delays and packets dropouts. This is also demonstrated in Fig.~\ref{fig:positions:2D:nonstationary}, which depicts the trajectories of all agents at different instants of time in this case. The same conclusions can be drawn from  Fig.~\ref{fig:positions:2D:partial} showing the results obtained in the partial state feedback case. Note that the trajectories of the leaders are the same in both cases and the trajectories of the followers are shown only in Fig.~\ref{fig:positions:2D:partial} for clarity.

\begin{figure}[h]\vspace{-0.1 in}
\centering{\includegraphics[width = 0.90 \columnwidth]{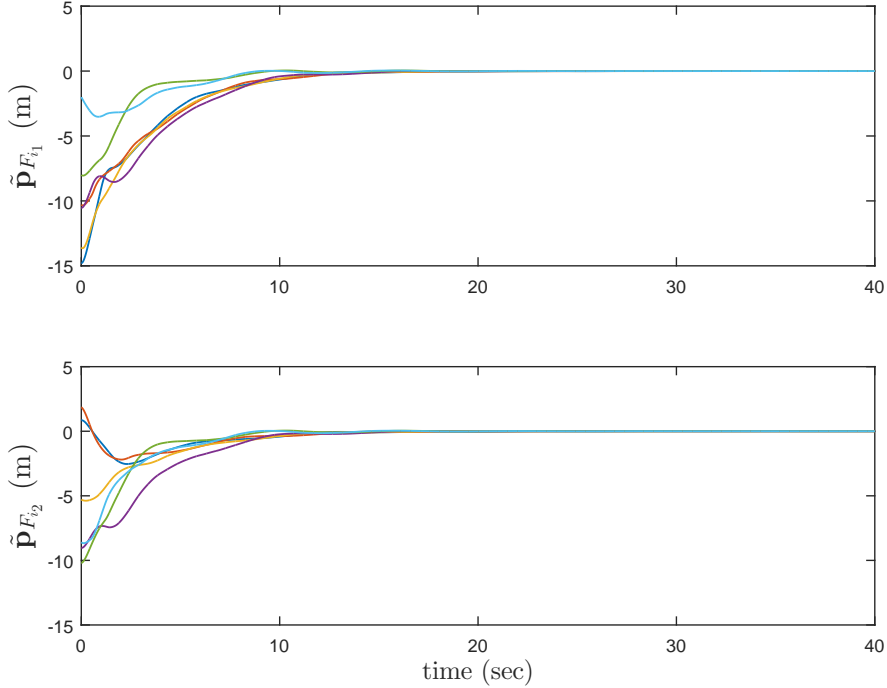}}%\\ \vspace{-0.15 in}
\vspace{-0.1 in}
\caption{{\footnotesize{Containment errors in Example 1 with control algorithm \eqref{input:dynamic:1}-\eqref{psi_input_dynamic1}.}}}
\label{fig:containment:error:variable:nonstationary}
\end{figure}

\begin{figure}[h]\vspace{-0.1 in}
\centering
{\includegraphics[width = 0.90\columnwidth]{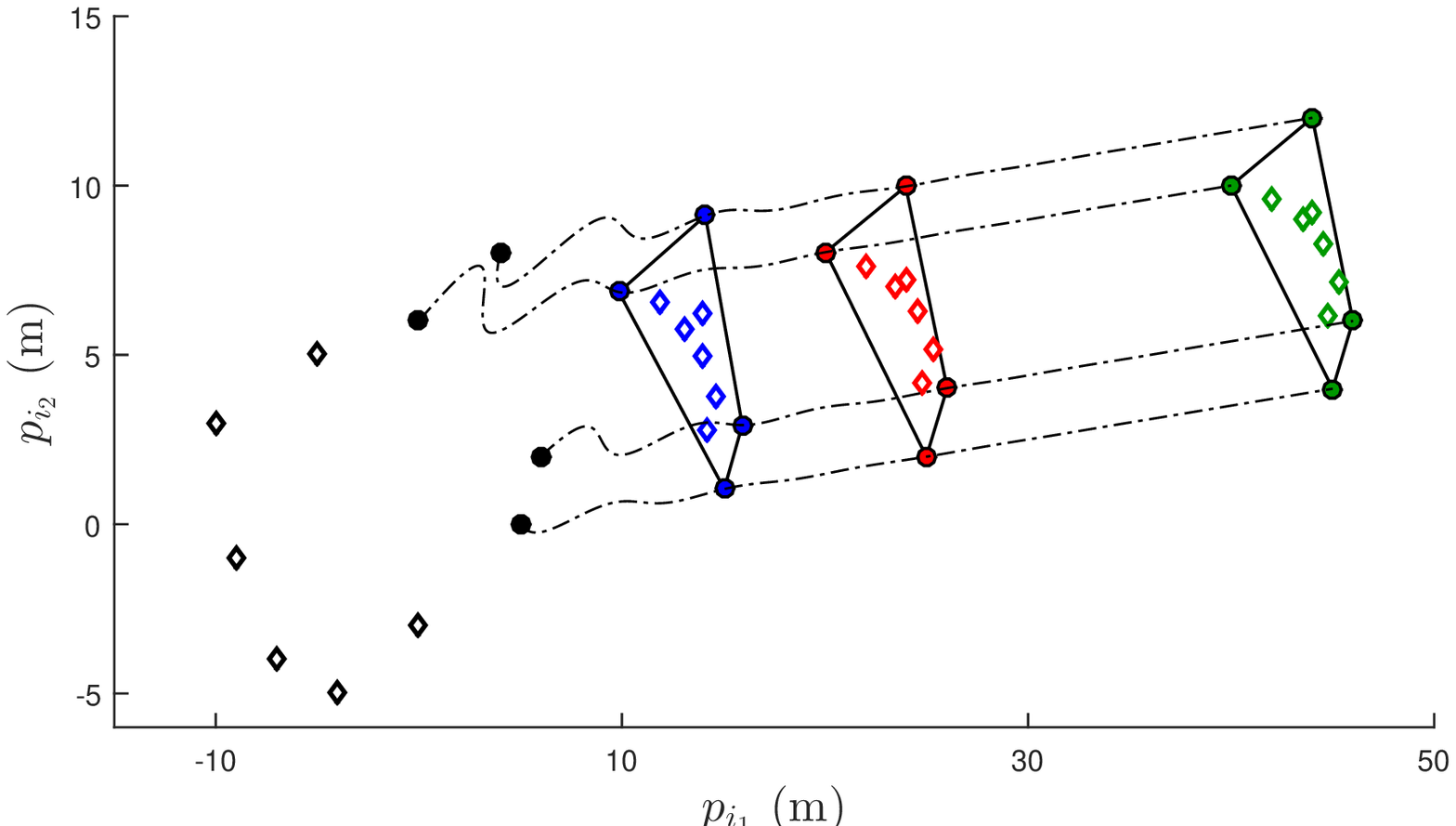}}
\vspace{-0.05 in}
\caption{{\footnotesize{Simulation results for Example 1 with control algorithm \eqref{input:dynamic:1}-\eqref{psi_input_dynamic1}: Trajectories of the leaders (black dashed lines), convex hull of the leaders at different instants of time (closed shape with black sides), positions of the six followers (diamonds) and four leaders (circles) at instants: 0 $\sec$ (black), 10 $\sec$ (blue), 20 $\sec$ (red), and 40 $\sec$ (green). }}}
\label{fig:positions:2D:nonstationary}
\end{figure}

\begin{figure}\vspace{-0.1 in}
\centering{
\includegraphics[width = 0.90\columnwidth]{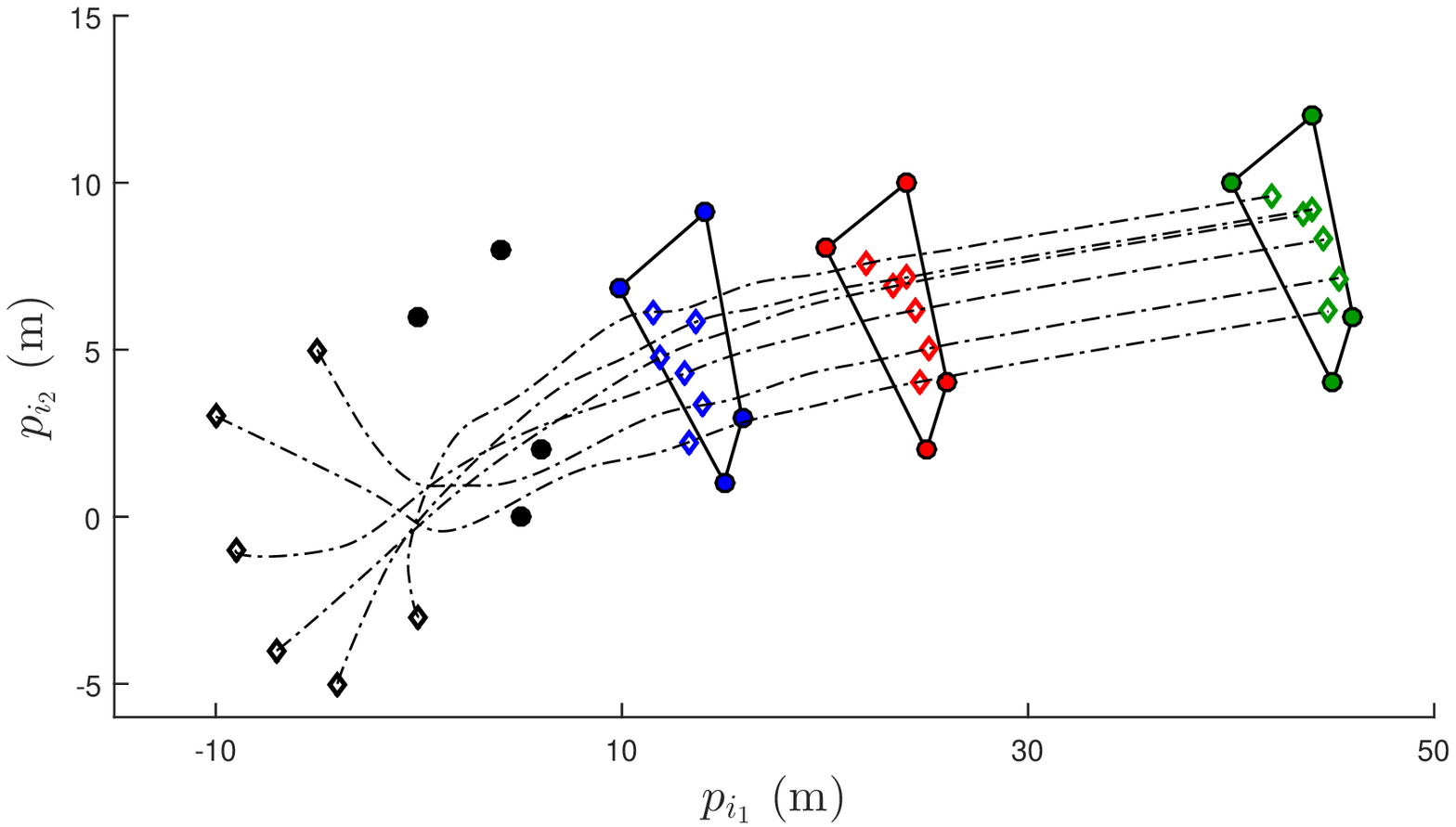}}
\vspace{-0.05 in}
\caption{{\footnotesize{Simulation results for Example 1 with control algorithm \eqref{input:dynamic:withoutvelocity}-\eqref{dot_phi_input_withoutvelocity}, \eqref{filter_observer_dynamic}, and \eqref{dot_psi_input_dynamic}: Trajectories of the followers (black dashed lines), convex hull of the leaders at different instants of time (closed shape with black sides), positions of the six followers (diamond) and the four leaders (circles) at instants: 0 $\sec$ (black), 10 $\sec$ (blue), 20 $\sec$ (red), and 40 $\sec$ (green). }}}
\label{fig:positions:2D:partial}
\end{figure}

        \noindent\emph{Example~2}:~ We consider the case where the followers are governed by the nonlinear dynamics \eqref{model_nl} with \eqref{nolinearity:example}. In particular, and similarly to \cite{wang:X:ISS:2014}, we let $$F_i(p_i, v_i, \Gamma_i) = \Gamma_i + (v_{i_1}^2, v_{i_{2}}^2)^{\top}, \quad~i\in\mathfrak{F},$$ which satisfies Assumption~\ref{assm:nonlinearity} with $\delta^p_{f_i}(|p_i|)=0$ and $\delta^q_{f_i}(|v_i|)=|v_i|^2$. Then, for all follower agents, we implement the control algorithm described in Corollary~\ref{corollary_nl} with the control gains $k_{r_i}=1$, $\lambda_i = 2$, $k_{p_i}=k_{d_i}=L_{p_i}=L_{d_i} = 2$, $i\in\mathfrak{F}$.

        To validate the result in Proposition~\ref{theorem_formation_leaders}, we assume that the leaders can transmit their information, using the same communication process, under the directed interconnection graph $\mathcal{G}_{\mathfrak{L}}$ having the set of edges $\mathcal{E}_{\mathfrak{L}}=\{(7,8),(8,9),(10,7)\}$, which is a spanning tree rooted at node $10$. Then, we implement the algorithm in Proposition~\ref{theorem_formation_leaders} with arbitrary initial conditions of the leader agents and  $\bar{v}_d(t) = (1,0.1)^\top + (\cos(t)-0.2\sin(t))e^{-0.2t}~\mathrm{m/\sec}$. Also, the control gains are selected as: $k_{d_{i}}=4$, $i\in\mathfrak{L}$, $k_{p_i}=4$, $k_{\psi_i}=1$, $i\in\{7,8,9\}$.  The desired separation vectors between the leader agents are considered as $\delta_{ij}=\delta_i - \delta_j$, with $\delta_7=(1,-2)^\top$, $\delta_{8}=(2,1)^\top$, $\delta_{9}=(-1,2)^\top$ and $\delta_{10}=(-2,-1)^\top$. The obtained results in this case are given in Fig.~\ref{fig:positions:2D:nl:form}, which shows that all the followers converge to the convex hull spanned by the leaders and the leaders converge to the specified geometric shape with the desired velocity, using intermittent and delayed communication between agents.

%\begin{figure}[h]\vspace{-0.1 in}
%\centering
% \subfloat[Containment errors.]{\label{fig:containment:error:variable:nl:form}
%\includegraphics[width = 0.49\columnwidth]{figs/containment_position_error_nl_form}}
%\subfloat[Positions of agents.]{\label{fig:positions:2D:nl:form}
%\includegraphics[width = 0.49\columnwidth]{figs/positions_2D_nl_form}}
%%
%\vspace{-0.1 in}
%\caption{{\footnotesize{Simulation results in case of Theorem~\ref{theorem_nl} and Theorem~\ref{theorem_formation_leaders}. In (b): Positions of the leaders (black lines), convex hull of the leaders at different instants of time (closed shape with black sides), positions of the six followers (diamond) and four leaders (circles) at instants: 0 $\sec$ (black), 10 $\sec$ (blue), 20 $\sec$ (red), and 40 $\sec$ (green). }}}
%\label{fig1}
%\end{figure}
\begin{figure}[h]\vspace{-0.1 in}
\centering
% \subfloat[Containment errors.]{\label{fig:containment:error:variable:nl:form}
%\includegraphics[width = 0.49\columnwidth]{figs/containment_position_error_nl_form}}
%\subfloat[Positions of agents.]
{
\includegraphics[width = 0.90\columnwidth]{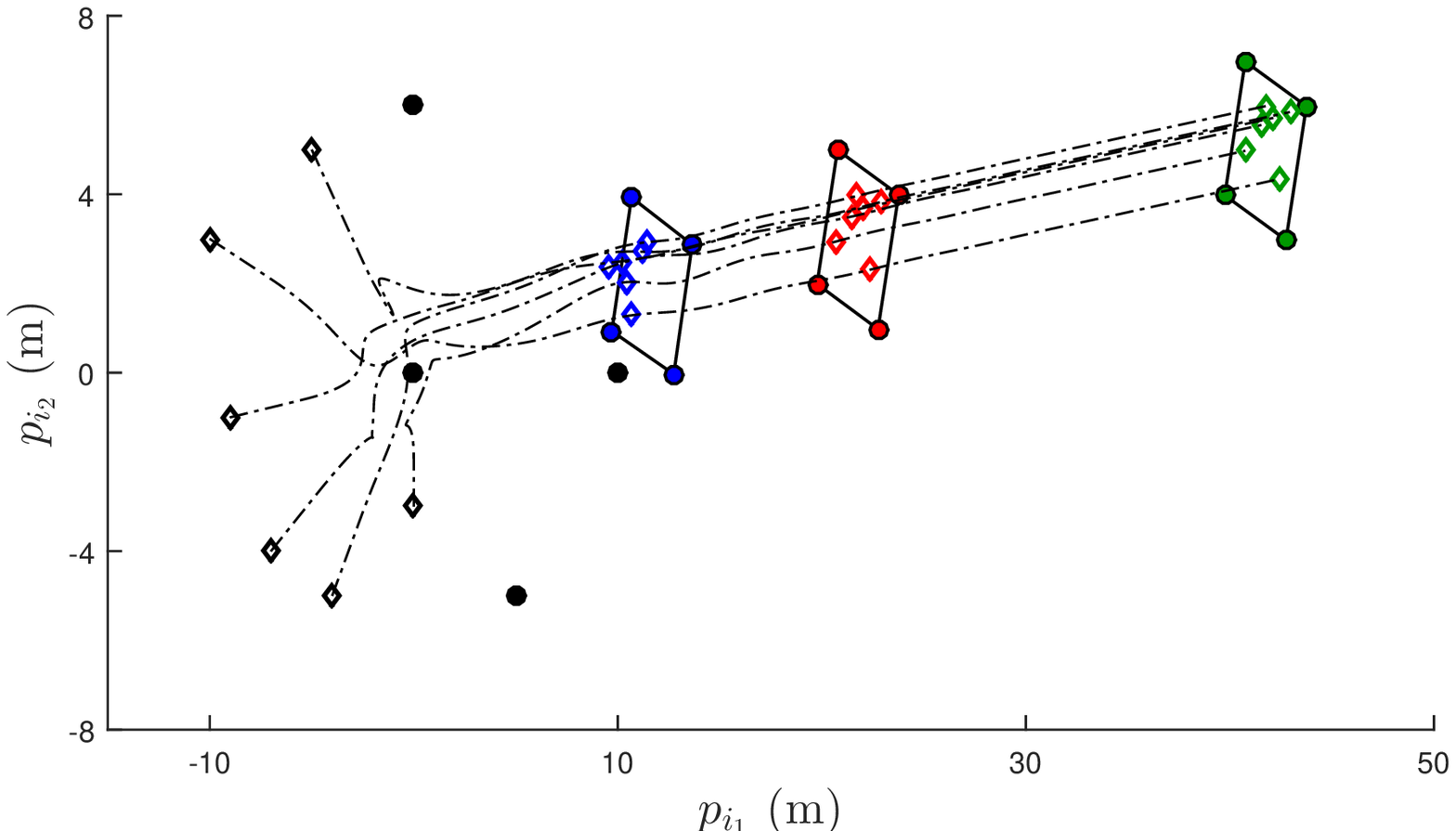}}
\vspace{-0.05 in}
\caption{{\footnotesize{Simulation results for Example 2: Trajectories of the followers (black dashed lines), convex hull of the leaders at different instants of time (closed shape with black sides), positions of the six followers (diamonds) and the four leaders (circles) at instants: 0 $\sec$ (black), 10 $\sec$ (blue), 20 $\sec$ (red), and 40 $\sec$ (green). }}}
\label{fig:positions:2D:nl:form}
\end{figure}

\section{Conclusion}
        In this paper, we presented solutions to the containment control problem with multiple dynamic leaders for linear and  nonlinear second-order multi-agent systems under mild assumptions on the communication and interconnection between agents. In all the proposed control algorithms, the information exchange is assumed to be intermittent with irregular time-varying communication delays and possible information losses. Our results are guaranteed under simple design conditions on the control parameters that can be realized independently from the characteristics of the communication process and without an {\it a priori} knowledge on the general directed interconnection graph topology. These distinctive features make our results fundamentally different from the available relevant literature as discussed throughout the paper.

       % In a future research, we will consider the extension of this work to the case of multiple dynamic leaders with   arbitrary and unknown time-varying acceleration in the presence of the above communication constraints. Without such constraints, this problem has been solved either by allowing all agents to share their control efforts, by using one- and two-hop algorithms, or by treating the unknown leaders' accelerations as state-dependent disturbances that can be compensated using robust control techniques (see, for example, \cite{ren2008distributed, Mei:Ren:2011, Mei:Ren:2012, meng2014leader}). Unfortunately, these methods do not apply even in the simple, and widely considered, case of constant communication delays using continuous-time communication.

\appendix

\section{proof of Lemma~\ref{theorem_unified}}\label{section_proofs}

%\subsection{Proof of Lemma~\ref{theorem_unified}}
         Before addressing the proof of Lemma~\ref{theorem_unified}, we recall some definitions on input-to-state and input-to-output stability notions as well as a small gain theorem proved in our earlier work \cite{Abdessameud:Polushin:Tayebi:2013:ieeetac}.
%
 %       Let us begin by presenting some definitions and a preliminary result that will be used in the proof of Lemma~\ref{theorem_unified}.
\noindent {\it Definitions:}~ Consider an affine nonlinear system of the form
            \begin{equation}
            \label{affine001}
            \begin{array}{rcl}
            {\dot x}&=&f(x)+ \displaystyle\sum_{i=1}^p g_i(x)u_i\\
            y_j & = & h_j(x),\quad j=1,\ldots, q,
            \end{array}
            \end{equation}
        where  ${x}\in\mathbb{R}^N$, ${u}_i\in\mathbb{R}^{\tilde{m}_i}$ for $i\in\mathcal{N}_p:=\{1,\ldots,p\}$, ${y}_j\in\mathbb{R}^{\bar{m}_j}$ for $j\in\mathcal{N}_q:=\{1,\ldots,q\}$, and $f(\cdot )$, $g_i(\cdot )$, for $i\in\mathcal{N}_p$, and $h_j(\cdot )$, for $j\in\mathcal{N}_q$, are locally Lipschitz functions of the corresponding dimensions, $f(0)=0$, $h(0)=0$. We assume that for any initial condition $x(t_0)$ and any inputs $u_1(t)$, \ldots, $u_p(t)$ that are uniformly essentially bounded on $[t_0, t_1)$, the corresponding solution $x(t)$ is well defined  for all $t\in [t_0, t_1]$.
        \begin{definition}\cite{sontag:06:1}
        \label{def_iss}
           A system of the form \eqref{affine001} is said to be input-to-state stable (ISS) if there exist\footnote{ The definition of the class functions $\mathcal{K}$, $\mathcal{K}_{\infty}$, and $\mathcal{KL}$ can be found in ~\cite{khalil:02}. Also, ${\bar {\mathcal K}}:={\mathcal K}\cup\{{\mathcal O}\}$, ${\bar {\mathcal K}}_{\infty}:={\mathcal K}_{\infty}\cup\{{\mathcal O}\}$, where ${\mathcal O}$ is zero function, ${\mathcal O}(s)\equiv 0$ for all $s\ge 0$.}
                 $\beta\in {\mathcal K}_{\infty}$ and $\gamma_{j}\in {\bar{\mathcal K}}$, $j\in\mathcal{N}_p$, such that the following inequalities hold along the trajectories of the system for any Lebesgue measurable uniformly essentially bounded inputs $u_j$, $j\in\mathcal{N}_p$:
            	\begin{itemize}
              	\item [i)] %{\it uniform boundedness:} ~
              $\forall~t_0,~ t\in{\mathbb R},~ t\ge t_0$, we have \vspace{-0.2 cm}
              	$$\left|x(t)\right|\le \beta\left(|x(t_0)|\right)+\sum\limits_{j=1}^{p} \gamma_{j}\left(\sup\limits_{s\in\left[t_0, t\right)}|u_j(s)|\right),$$
              	\item[ii)] %{\it asymptotic gain:} \vspace{-0.2 cm}
              	$\limsup\limits_{t\to +\infty} \left|x(t)\right|\le \sum\limits_{j=1}^{p}\gamma_{j}\left(\limsup\limits_{t\to +\infty} |u_j(t)|\right).$
            	\end{itemize}
           \end{definition}

        In the above definition, $\gamma_j\in\bar{\mathcal{K}}$, $j\in\mathcal{N}_p$, are called the ISS gains. It should be pointed out that for a system of the form \eqref{affine001}, the ISS implies the input-to-output stability  (IOS) \cite{sontag:06:1}, which means that there exist $\beta_i\in \mathcal{KL}$ and $\gamma_{ij}\in {\bar{\mathcal K}}$, $i\in\mathcal{N}_q$, $j\in\mathcal{N}_p$, such that the inequality
        $$\left|y_i(t)\right|\le \beta_i\left(|x(t_0)|, t \right)+ \sum\limits_{j=1}^{p} \gamma_{ij}\left(\sup\limits_{s\in\left[t_0, t\right)}|u_j(s)|\right)$$
        holds for all $i\in\mathcal{N}_q$  and $\forall~t_0, ~t\in{\mathbb R},~ t\ge t_0$.
        In this case, the function $\gamma_{ij}\in {\bar {\mathcal K}}$, $i\in\mathcal{N}_q$ and $j\in\mathcal{N}_p$, is called the IOS gain from the input $u_j$ to the output $y_i$. In this paper, we mostly deal with the case where the IOS gains are linear functions of the form $\gamma_{ij}(s):=\gamma_{ij}^0\cdot s$,  where $\gamma_{ij}^0\ge 0$; in this case, we simply say that the system has linear IOS gains $\gamma_{ij}^0\ge 0$.

%\subsubsection{Small gain theorem}\label{append_small_gain_theorem}
 Consider the following IOS small-gain theorem, which can be proved following similar steps as in the proof of \cite[Theorem 1]{Abdessameud:Polushin:Tayebi:2013:ieeetac}.

\begin{theorem} \label{theorem001a}
          Consider a system of the form \eqref{affine001}. Suppose the system is IOS with linear IOS gains $\gamma_{ij}^0\ge 0$. Suppose also that each input $u_j(\cdot)$, $j\in\mathcal{N}_p$, is a Lebesgue measurable function satisfying:
                   	%\begin{equation}
                     $u_j(t)\equiv 0$, for $t<0$, % \; \label{less00}
                    %\end{equation}
          and                     \begin{equation}
                            |u_j(t)|\le\sum\limits_{i\in\mathcal{N}_q} {\mu}_{ji}\cdot\sup\limits_{s\in\left[t-\vartheta_{ji}(t), t\right]} \left| y_i(s)\right| +|\delta_j(t)|,
                            \label{commconstraints0010}
                    \end{equation}
          for almost all $t\ge 0$, where $ {\mu}_{ji}\ge 0$,  all  $\vartheta_{ji}(t)$ are Lebesgue measurable uniformly bounded nonnegative functions of time, and $\delta_j(t)$ is an uniformly essentially bounded signal (uniformly bounded almost everywhere except for a set of measure zero). Let $\mathscr{G}:=\mathscr{G}^0\cdot {\mathcal M}\in{\mathbb R}^{q\times q}$, where $\mathscr{G}^0:=\left\{ \gamma^0_{ij} \right\}$, ${\mathcal M}:=  \left\{{\mu}_{ji}\right\}$, $i\in\mathcal{N}_q$, $j\in\mathcal{N}_p$. If $\bs{\rho}(\mathscr{G})<1$,
          %where $\rho\left(\mathscr{G} \right)$ is the spectral radius of the matrix $\mathscr{G}$,
          then the trajectories of the system \eqref{affine001} with input-output constraints \eqref{commconstraints0010} are well defined for all $t\ge 0$ and such that all the outputs  $y_i(t)$, $i\in\mathcal{N}_q$, and all the inputs  $u_j(\cdot)$, $j\in\mathcal{N}_p$,  are uniformly bounded. If, in addition, $|\delta_j(t)|\to 0$, $j\in\mathcal{N}_p$, then $\left|y_i(t)\right|\to 0$, $\left|u_j(t)\right|\to 0$ for $i\in\mathcal{N}_q$ and $j\in\mathcal{N}_p$.
           	$\square$
    \end{theorem}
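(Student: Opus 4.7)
}

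The plan is to convert both the IOS bound on the plant and the constraint \eqref{commconstraints0010} into time-monotone scalar inequalities on running suprema, and then invert a positive matrix inequality governed by $\mathscr{G}$. Concretely, I will work on any finite interval $[0,t^*)$ on which the closed-loop solution exists, define
\[
Y_i(t):=\sup_{s\in[0,t]}|y_i(s)|,\qquad U_j(t):=\sup_{s\in[0,t]}|u_j(s)|,\qquad D_j(t):=\operatorname*{ess\,sup}_{s\in[0,t]}|\delta_j(s)|,
\]
for $i\in\mathcal{N}_q$, $j\in\mathcal{N}_p$, and stack them into vectors $Y(t), U(t), D(t)$. From the linear IOS property of \eqref{affine001} and monotonicity of $\beta_i(\cdot,\cdot)$ in its second argument, the first step produces the componentwise bound $Y(t)\le B + \mathscr{G}^0 U(t)$, with $B_i:=\beta_i(|x(0)|,0)$. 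From the constraint \eqref{commconstraints0010}, together with $u_j(t)\equiv 0$ for $t<0$, uniform boundedness of the delays $\vartheta_{ji}$, and the elementary fact $\sup_{s\in[t-\vartheta_{ji}(t),t]}|y_i(s)|\le Y_i(t)$, the second step yields $U(t)\le \mathcal{M} Y(t) + D(t)$.

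Substituting the second inequality into the first gives the key closed-form estimate
\[
(I-\mathscr{G})\,Y(t)\ \le\ B + \mathscr{G}^0 D(t),\qquad \mathscr{G}:=\mathscr{G}^0\mathcal{M}\ge 0.
\]
Because $\mathscr{G}$ is entrywise nonnegative and $\bs{\rho}(\mathscr{G})<1$, the Neumann series $\sum_{k\ge 0}\mathscr{G}^k$ converges and equals $(I-\mathscr{G})^{-1}$, which is therefore entrywise nonnegative. Multiplying the componentwise inequality by $(I-\mathscr{G})^{-1}$ preserves the direction of the inequality and delivers
\[
Y(t)\ \le\ (I-\mathscr{G})^{-1}\bigl(B + \mathscr{G}^0 D(t)\bigr),
\]
which is a uniform bound on each $Y_i(t)$ by a quantity independent of $t^*$. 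Hence $y_i$ and, via $U(t)\le\mathcal{M} Y(t)+D(t)$, $u_j$ are uniformly bounded on $[0,t^*)$.

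Well-posedness for all $t\ge 0$ is then a standard maximal-interval argument: if the maximal interval of existence were a bounded $[0,t^*)$, then by continuability the state $x(t)$ would have to become unbounded as $t\nearrow t^*$; but the ISS estimate bounds $|x(t)|$ in terms of $|x(0)|$ and $\sup_{[0,t)}|u_j|$, the latter of which is uniformly bounded by the previous paragraph, a contradiction. Thus the solution exists on $[0,\infty)$ and all the above bounds hold globally.

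For the asymptotic statement, assume in addition $|\delta_j(t)|\to 0$ and work with the asymptotic quantities $\bar Y_i:=\limsup_{t\to\infty}|y_i(t)|$, $\bar U_j:=\limsup_{t\to\infty}|u_j(t)|$. The IOS ``ii)'' inequality of Definition \ref{def_iss} gives $\bar Y\le \mathscr{G}^0\bar U$; the constraint \eqref{commconstraints0010}, combined with uniform boundedness of the $\vartheta_{ji}$ and the fact that a sliding supremum of a function over a window of bounded length has the same $\limsup$ as the function itself, gives $\bar U\le \mathcal{M}\bar Y$ (since $\bar D_j=0$). Chaining the two produces $(I-\mathscr{G})\bar Y\le 0$, and multiplying by the nonnegative matrix $(I-\mathscr{G})^{-1}$ forces $\bar Y=0$, whence $\bar U=0$. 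The likely main obstacle is bookkeeping around mixed vector inequalities under monotone transformations --- specifically, justifying that $(I-\mathscr{G})^{-1}\ge 0$ when $\bs{\rho}(\mathscr{G})<1$ (via Neumann series) and carefully handling the sliding-window $\limsup$ identity so that the asymptotic gain cascade legitimately closes; the well-posedness bootstrap and the handling of $u_j(t)\equiv 0$ for $t<0$ (so that the constraint is meaningful near $t=0$) are the remaining technical points that I would check with care.
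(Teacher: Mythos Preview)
The paper does not actually supply a proof of Theorem~\ref{theorem001a}; it only states that the result ``can be proved following similar steps as in the proof of \cite[Theorem~1]{Abdessameud:Polushin:Tayebi:2013:ieeetac}.'' Your plan is the standard linear small-gain argument and is correct: pass to running suprema, combine the IOS bound $Y\le B+\mathscr{G}^0 U$ with the feedback constraint $U\le \mathcal{M}Y+D$ to get $(I-\mathscr{G})Y\le B+\mathscr{G}^0 D$, and invert using the nonnegativity of $(I-\mathscr{G})^{-1}=\sum_{k\ge 0}\mathscr{G}^k$ when $\bs{\rho}(\mathscr{G})<1$; the asymptotic claim then follows from the asymptotic-gain inequality together with the bounded-window $\limsup$ identity you flag. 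This is exactly the template of the small-gain theorem in the cited reference, so your approach coincides with what the paper invokes.

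One minor correction on the well-posedness step: you appeal to an ISS bound on $|x(t)|$, but the hypothesis of the theorem is only IOS. In the paper's framework this gap is closed by the standing assumption stated just before Definition~\ref{def_iss} (inputs uniformly essentially bounded on $[t_0,t_1)$ imply the solution is well defined on $[t_0,t_1]$). Your boundedness argument already shows the inputs are uniformly bounded on any interval of existence, so you can cite that standing assumption directly rather than ISS to extend the solution to all $t\ge 0$.
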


           Now, we are ready to proof Lemma~\ref{theorem_unified}. First, note that Assumption~\ref{assumption_graph} ensures that $\kappa_i$ in \eqref{input_filter_lemma} satisfies $\kappa_i\neq 0$ for $i\in\mathfrak{F}$.
             Also, Assumption~\ref{assumption_graph} and Lemma~\ref{lemma_Mei} ensure that $\mathcal{L}_1$ is a non-singular M-matrix. Let $$\bs{\eta}_{c} = \big(\eta_{c_1}^{\top}, \ldots, \eta_{c_m}^{\top}\big)^{\top} := (-\mathcal{L}_1^{-1}\mathcal{L}_2\otimes I_N) \bs{\eta}_L \in\mathbb{R}^{mN},$$ with $\eta_{c_i}\in\mathbb{R}^N$ for $i\in\mathfrak{F}$ and $\mathcal{L}_1$, $\mathcal{L}_2$ are defined in \eqref{laplacian}.

             Consider the error vectors
             \begin{equation}\label{tilde_eta_delta}
             \tilde{\eta}_i = \eta_i - \eta_{c_i}, \quad \tilde{\delta}_i = \delta_i - \eta_{c_i}, \quad \tilde{\zeta}_i = \zeta_i -\bar{\eta}_{c_i}
             \end{equation}
             for $i\in\mathfrak{F}$, with $\bar{\eta}_{c_i}:=(\mbf{1}_{\sigma_i}\otimes I_N) \eta_{c_i}\in\mathbb{R}^{\sigma_i N}$. Using \eqref{filter_follower_lemma}-\eqref{filter:lemma}, we can verify that
             \begin{eqnarray}
             \label{closed:dynamics_pflemma1:1}\dot{\tilde{\eta}}_i&=& -k_{\eta_i}\tilde{\eta}_i + k_{\eta_i}\tilde{\delta}_i + \Phi_{i,1} - \dot{\eta}_{c_i}\\
             \dot{\tilde{\zeta}}_i &=& \bar{H}_i \tilde{\zeta}_i + \bar{H}_i \bar{\eta}_{c_i} + \bar{B}_i \varepsilon_i + \Phi_{i,2} - \dot{\bar{\eta}}_{c_i}\\
             \label{closed:dynamics_pflemma1:3}\tilde{\delta}_i &=& \alpha \bar{C}_i \tilde{\zeta}_i + \alpha \bar{C}_i \bar{\eta}_{c_i} + (1-\alpha)\varepsilon_i - \eta_{c_i}
             \end{eqnarray}
            where we used notation $\bar{H}_i=(H_i\otimes I_N)$, $\bar{B}_i = (B_i \otimes I_N)$, $\bar{C}_i = (C_i\otimes I_N)$ for simplicity. From \eqref{gains_filter}, it is straightforward to show that $\bar{H}_i \bar{\eta}_{c_i} + \bar{B}_i \varepsilon_i = \bar{B}_i (\varepsilon_i - \eta_{c_i})$ and $\bar{C}_i \bar{\eta}_{c_i} = \eta_{c_i}$, $i\in\mathfrak{F}$.  Also, from the definition of $\bs{\eta}_{c}$ and \eqref{D}-\eqref{laplacian}, one can verify that
            \begin{align}
            (\mathcal{D}_1\otimes I_N) \bs{\eta}_c %&= -(\mathcal{D}_1\otimes I_N)(\mathcal{L}_1^{-1}\mathcal{L}_2\otimes I_N)\bs{\eta}_L\nonumber\\
            &= -(\mathcal{L}_2\otimes I_N)\bs{\eta}_L +  (\mathcal{A}_1\otimes I_N)\bs{\eta}_c
            \end{align}
            where we used relation $\mathcal{D}_1 = \mathcal{L}_1 + \mathcal{A}_1$. Since the $(i,i)$-th entry of $\mathcal{D}_1$ is $\kappa_i$, we obtain
            \begin{equation}\label{eta_ci_pf_lemma}
            \eta_{c_i} = \frac{1}{\kappa_i}\Big(\sum_{j=1}^m a_{ij} \eta_{c_j} + \sum_{j=m+1}^n a_{ij} \eta_{j}\Big), \quad i\in\mathfrak{F}.
            \end{equation}
            Also, the input vector $\varepsilon_i$, in view of \eqref{input_filter_lemma} and \eqref{tilde_eta_delta}, satisfies
            \begin{equation*}%\label{varepsiloni_pf_lemma}
             \varepsilon_i = \frac{1}{\kappa_i}\Big(\sum_{j=1}^m a_{ij}\big(\tilde{\eta}_j(t_{k_{ij}^{\mathrm{m}}(t)}) + \eta_{c_j}(t_{k_{ij}^{\mathrm{m}}(t)})\big) +\sum_{j=m+1}^n a_{ij}\eta_j(t_{k_{ij}^{\mathrm{m}}(t)})\Big)
            \end{equation*}
            for $i\in\mathfrak{F}$. Then, it can be deduced from the last two equations that
            $\varepsilon_i - \eta_{c_i} = u_i$, $i\in\mathfrak{F}$, with
            \begin{align}
            \label{input_for_ISS}
            u_i =&~ \frac{1}{\kappa_i}\sum_{j=1}^m a_{ij} \tilde{\eta}_j (t_{k_{ij}^{\mathrm{m}}(t)}) - \frac{1}{\kappa_i} \sum_{j=1}^m a_{ij}\big(\eta_{c_j} - \eta_{c_j}(t_{k_{ij}^{\mathrm{m}}(t)})\big)\nonumber\\
            &~ - \frac{1}{\kappa_i}\sum_{j=m+1}^n a_{ij}\big(\eta_{j} - \eta_{j}(t_{k_{ij}^{\mathrm{m}}(t)})\big).
            \end{align}
            Then, the closed loop dynamics \eqref{closed:dynamics_pflemma1:1}-\eqref{closed:dynamics_pflemma1:3} are equivalent to
            \begin{eqnarray}
             \label{dot_tilde_eta_pf_lemma}\dot{\tilde{\eta}}_i&=& -k_{\eta_i}\tilde{\eta}_i + \alpha k_{\eta_i}\bar{C}_i \tilde{\zeta}_i + (1-\alpha)k_{\eta_i}u_i + \mathcal{Y}_i\\
             \label{dot_tilde_zeta_pf_lemma}\dot{\tilde{\zeta}}_i &=& \bar{H}_i \tilde{\zeta}_i + \bar{B}_i u_i + \Upsilon_{i}
             \end{eqnarray}
             with  $\mathcal{Y}_{i}:= \Phi_{i,1} - \dot{\eta}_{c_i}$, and $\Upsilon_{i}:=  \Phi_{i,2} - \dot{\bar{\eta}}_{c_i}$ for $i\in\mathfrak{F}$.

            In view of \eqref{tilde_eta_delta}, the results of Lemma~\ref{theorem_unified} can be verified if the error vectors $\tilde{\eta}_i$, $i\in\mathfrak{F}$, and their first time-derivatives are uniformly bounded and converge asymptotically to zero. This can be shown using the result of Theorem~\ref{theorem001a} as follows. First, one needs to show that the overall system that consists of all systems \eqref{dot_tilde_eta_pf_lemma}-\eqref{dot_tilde_zeta_pf_lemma}, $i\in\mathfrak{F}$, is IOS with respect to some appropriately defined input and output vectors, and, in addition, the input vectors satisfy property \eqref{commconstraints0010} in Theorem~\ref{theorem001a}. Note that all systems \eqref{dot_tilde_eta_pf_lemma}-\eqref{dot_tilde_zeta_pf_lemma}, $i\in\mathfrak{F}$, are interconnected through the input vectors $u_i$, $i\in\mathfrak{F}$, given in \eqref{input_for_ISS}. Then, if one can obtain explicit expressions of the IOS gain matrix $\mathscr{G}^0$ and the interconnection matrix $\mathcal{M}$ defined in Theorem~\ref{theorem001a}, our objective can be shown under the conditions of Theorem~\ref{theorem001a}; $\bs{\rho}(\mathscr{G})<1$ where $\mathscr{G} = \mathscr{G}^0 \cdot\mathcal{M}$ is the closed loop gain matrix.

            Consider each system \eqref{dot_tilde_eta_pf_lemma}-\eqref{dot_tilde_zeta_pf_lemma} for $i\in\mathfrak{F}$, and let
             $\tilde{\zeta}_i:=\big(\tilde{\zeta}_{i,1}^{\top}, \ldots, \tilde{\zeta}_{i,\sigma_i}^{\top}\big)^{\top}\in\mathbb{R}^{\sigma_iN}$ and $\Upsilon_{i}:=\big(\Upsilon_{i,1}^{\top}, \ldots, \Upsilon_{i,\sigma_i}^{\top}\big)^{\top}\in\mathbb{R}^{\sigma_iN}$, with ${\tilde \zeta}_{i,\ell}\in\mathbb{R}^N$ and $\Upsilon_{i,\ell}\in\mathbb{R}^N$, for $\ell = 1, \ldots, \sigma_i$ and $i\in\mathfrak{F}$. Exploiting the structure of $\bar{H}_i$, $\bar{B}_i$, and $\bar{C}_i$ in \eqref{gains_filter},  one can show that the following estimates
             {\small{\begin{align}\label{estimate_tilde_eta_pf_lemma}
                 	\left|\tilde{\eta}_i(t)\right|
                  	\le&~
                          	e^{-k_{\eta_i}(t-t_0)}\left|\tilde{\eta}_i(t_0)\right|%\nonumber\\
                      	+ \alpha\sup\limits_{\varsigma\in[t_0, t]}\big(| \tilde{\zeta}_{i,1}(\varsigma)|\big)\nonumber \\
                      &+ (1-\alpha)\sup\limits_{\varsigma\in[t_0, t]}\big(|u_i(\varsigma)|\big)
                          +\frac{1}{k_{\eta_i}}\sup\limits_{\varsigma\in[t_0, t]}\big(|\mathcal{Y}_{i}(\varsigma)|\big),
            	\end{align}}}
              {\small{\begin{align}
                              	\left|
                              	\tilde{\zeta}_{i,\ell}(t) \right|
                      	\le&~e^{-h_{i,\ell}(t-t_0)}\left|
                              	\tilde{\zeta}_{i,\ell}(t_0)\right|
                          	+\sup\limits_{\varsigma\in[t_0, t]}\big(| \tilde{\zeta}_{i,\ell+1}(\varsigma)|\big)\nonumber\\
                          &+\frac{1}{h_{i,\ell}}\sup\limits_{\varsigma\in[t_0, t]}\big(|\Upsilon_{i,\ell}(\varsigma)|\big),
                	\end{align}}}
         for $\ell=1,\ldots, \sigma_i-1$, and
         {\small{\begin{align}\label{estimate_tilde_zeta_2_pf_lemma}
                              	\left|
                              	\tilde{\zeta}_{i,\sigma_i}(t) \right|
                      	\le&~
                              	e^{-h_{i,\sigma_i}(t-t_0)}\left|
                              	\tilde{\zeta}_{i,\sigma_i}(t_0)\right|
                          	+
                          \sup\limits_{\varsigma\in[t_0, t]}\big(|u_{i}(\varsigma)|\big) \nonumber\\
                           &+ \frac{1}{h_{i,\sigma_i}}\sup\limits_{\varsigma\in[t_0, t]}\big(| \Upsilon_{i,\sigma_i}(\varsigma)|\big)
                	\end{align}}}
         hold for all $t>t_0 \geq 0$ and $i\in\mathfrak{F}$. The above inequalities show that each system \eqref{dot_tilde_eta_pf_lemma}-\eqref{dot_tilde_zeta_pf_lemma}, $i\in\mathfrak{F}$, is a cascade connection of $\sigma_i + 1$ subsystems, where each subsystem is ISS. Therefore, each system \eqref{dot_tilde_eta_pf_lemma}-\eqref{dot_tilde_zeta_pf_lemma}, for $i\in\mathfrak{F}$, is also ISS with respect to the inputs $u_i$, $\mathcal{Y}_i$, and the components of $\Upsilon_i$. Consequently, the system \eqref{dot_tilde_eta_pf_lemma}-\eqref{dot_tilde_zeta_pf_lemma} with output $\tilde{\eta}_i$ can be shown to be IOS with respect to the same input vectors, in particular, the IOS gain with respect to the input $u_i$ is equal to 1 for both values of $\alpha$. As a result, if one considers the overall system that consists of all the systems \eqref{dot_tilde_eta_pf_lemma}-\eqref{dot_tilde_zeta_pf_lemma}, $i\in\mathfrak{F}$, with the $m$ outputs $\tilde{\eta}_i$ and the $m$ inputs $u_i$, $i\in\mathfrak{F}$, one can verify that thus defined system is IOS with the IOS gain matrix $\mathscr{G}^0= I_m$. Note that $\mathscr{G}^0$ does not relate the outputs $\tilde{\eta}_i$ to the inputs $\mathcal{Y}_i$ and $\Upsilon_i$, $i\in\mathfrak{F}$, which does not affect our analysis. In fact, one can show, using \begin{equation}\label{dot_eta_c}
            \dot{\bs{\eta}}_c = (-\mathcal{L}_1^{-1}\mathcal{L}_2\otimes I_N) \dot{\bs{\eta}}_L,
            \end{equation}
         that $\mathcal{Y}_i$ and $\Upsilon_i$, $i\in\mathfrak{F}$, are uniformly bounded and converge asymptotically to zero if $\Phi_{i,1}$, $\Phi_{i,2}$, $i\in\mathfrak{F}$, and $\Psi_{i}$, $i\in\mathfrak{L}$, are uniformly bounded and converge to zero.

         Now, we derive estimates of the inputs $u_i$, $i\in\mathfrak{F}$, given in \eqref{input_for_ISS}. In view of \eqref{eta_ci_pf_lemma} and the fact that $(t-t_{k_{ij}^{\mathrm{m}}(t)})\leq T^*$ (by Assumption~\ref{AssumptionCommunicationBlackouts}), it can be verified that
          \begin{align}\label{estimate_u_pf_lemma}
         |u_i(t)| \leq&\int_{t_{k_{ij}^{\mathrm{m}}(t)}}^{t}\Big|\frac{1}{\kappa_i}\sum_{j=1}^m a_{ij}\dot{\eta}_{c_j}(\varsigma) +\frac{1}{\kappa_i}
            \sum_{j=m+1}^n a_{ij}\dot{\eta}_{j}(\varsigma)\Big|d\varsigma,\nonumber\\
            &+ \frac{1}{\kappa_i}\sum_{j=1}^m a_{ij}\sup\limits_{\varsigma\in[t_{k_{ij}^{\mathrm{m}}(t)}, t]}\big(|\tilde{\eta}_j(\varsigma)|\big) \nonumber\\
          \leq&~ \frac{1}{\kappa_i}\sum_{j=1}^m a_{ij}\sup\limits_{\varsigma\in[t_{k_{ij}^{\mathrm{m}}(t)}, t]}\big(|\tilde{\eta}_j(\varsigma)|\big) + |\Delta_i|,
         \end{align}
         with $|\Delta_i| := T^*\sup\limits_{\varsigma\in[t_{k_{ij}^{\mathrm{m}}(t)}, t]}\big(|\dot{\eta}_{c_i}(\varsigma)|\big)$.

          Then, the input vector $u_i$, $i\in\mathfrak{F}$, of system \eqref{dot_tilde_eta_pf_lemma}-\eqref{dot_tilde_zeta_pf_lemma} satisfies the conditions of Theorem~\ref{theorem001a} with the elements of the interconnection matrix ${\mathcal M}:=  \left\{{\mu}_{ij}\right\}\in\mathbb{R}^{m\times m}$ being obtained as $\mu_{ij}= \frac{a_{ij}}{\kappa_i}$ for $i,~j\in\mathfrak{F}$. This, with \eqref{D}-\eqref{laplacian}, lead to the closed-loop gain matrix
            \begin{equation}\mathscr{G}:=\mathscr{G}^0~ {\mathcal M}= \mathcal{D}_1^{-1}\mathcal{A}_1.
                      	   	\end{equation}
            Using the definition of $\mathcal{L}_1:= \mathcal{D}_1 - \mathcal{A}_1$, we know that $\mathcal{D}_1^{-1}\mathcal{L}_1 = I_m - \mathscr{G}$. Since $\mathcal{L}_1$ is a non-singular M-matrix, by Assumption~\ref{assumption_graph} and Lemma~\ref{lemma_Mei}, and $\mathcal{D}_1^{-1}$ is a diagonal matrix with strictly positive diagonal entries, it is straightforward to verify that  $(I_m - \mathscr{G})$ is also a non-singular M-matrix (see Definition~\ref{definition_M_matrix}) and hence $\bs{\rho}(\mathscr{G})<1$, which satisfies the condition in Theorem~\ref{theorem001a}.

            Note that for all $(i,j)\in\mathcal{E}$, Assumption~\ref{AssumptionCommunicationBlackouts} ensures that $(t-t_{k_{ij}^{\mathrm{m}}(t)})$ is bounded and satisfies $(t-t_{k_{ij}^{\mathrm{m}}(t)})\leq T^*$. Then, in view of the IOS property of each system \eqref{dot_tilde_eta_pf_lemma}-\eqref{dot_tilde_zeta_pf_lemma}, $i,\in\mathfrak{F}$, Theorem~\ref{theorem001a} can be used to show that $\tilde{\eta}_i$, $\dot{\tilde{\eta}}_i$, $\tilde{\zeta}_i$, $\dot{\tilde{\zeta}}_i$, and $u_i$, for $i\in\mathfrak{F}$, are uniformly bounded provided that $|\Delta_i|$, with $\mathcal{Y}_i$ and $\Upsilon_i$, $i\in\mathfrak{F}$, are uniformly bounded. We can verify that the latter conditions are satisfied under the conditions of item $i)$ in Lemma~\ref{theorem_unified}. In particular, relation \eqref{dot_eta_c} with Assumption~\ref{assumption_graph} imply that $|\Delta_i|$, $i\in\mathfrak{F}$, is uniformly bounded if $\dot{\eta}_i:= \Psi_{i}$, $i\in\mathfrak{L}$, is uniformly bounded. As a result, we conclude that $\tilde{\eta}_i$, $\dot{\tilde{\eta}}_i$, $\tilde{\zeta}_i$, $\dot{\tilde{\zeta}}_i$, for $i\in\mathfrak{F}$ are uniformly bounded. Also, it can be verified from \eqref{tilde_eta_delta} and \eqref{dot_eta_c} that $\dot{\eta}_i$ and $\dot{\zeta}_i$, $i\in\mathfrak{F}$, are uniformly bounded. This proves statement $i)$ in the lemma.

            Using similar arguments as above, we can show that $|\Delta_i(t)|\to 0$, $i\in\mathfrak{F}$, under the conditions of item $ii)$ in Lemma~\ref{theorem_unified}. Also, $\mathcal{Y}_i(t)\to 0$ and $\Upsilon_i(t)\to 0$, $i\in\mathfrak{F}$, under the same conditions. Then, Theorem~\ref{theorem001a}, with the IOS property of systems \eqref{dot_tilde_eta_pf_lemma}-\eqref{dot_tilde_zeta_pf_lemma}, can be used to show that $\tilde{\eta}_i(t)\to 0$, $\dot{\eta}_i(t)\to 0$, $\tilde{\zeta}_i(t)\to 0$, and $\dot{\zeta}_i(t)\to 0$, $i\in\mathfrak{F}$, which leads to the conclusions in item~$ii)$ in the lemma.

            Finally, let us consider the case where $\Phi_{i,1}$, $\Phi_{i,2}$, $i\in\mathfrak{F}$, and $\Psi_{i}$, $i\in\mathfrak{L}$, are uniformly bounded but do not converge to zero. We can deduce from \eqref{estimate_tilde_eta_pf_lemma}-\eqref{estimate_tilde_zeta_2_pf_lemma} and \eqref{dot_eta_c}-\eqref{estimate_u_pf_lemma} that the effects of non-zero perturbation terms $\Phi_{i,1}$, $\Phi_{i,2}$, $i\in\mathfrak{F}$, and $\Psi_{i}$, $i\in\mathfrak{L}$,  on the states of system \eqref{dot_tilde_eta_pf_lemma}-\eqref{dot_tilde_zeta_pf_lemma} can be reduced with a choice of the gains provided that $T^*$ is small and/or $\bs{\eta}_L$ is a slowly varying signal. In fact, it can be verified, form \eqref{estimate_tilde_eta_pf_lemma}-\eqref{estimate_tilde_zeta_2_pf_lemma}, that the effects of
            $\Upsilon_i : =\Phi_{i,2} - (\mbf{1}_{\sigma_i}\otimes I_N)\dot{\eta}_{c_i}$ (in the case $\alpha = 1$) and $\mathcal{Y}_i:=\Phi_{i,1}-\dot{\eta}_{c_i}$ can be {\it arbitrarily} reduced, respectively, with a choice of the entries of the gain matrix $\bar{H}_i$ and the gains $k_{\eta_i}$. In addition, it can be verified from \eqref{estimate_u_pf_lemma} that $|\Delta_i(t)|$, $i\in\mathfrak{F}$ is small for small values of $T^*$ and/or small values of $|\dot{\eta}_{c_i}|$, $i\in\mathfrak{F}$, where, in view of \eqref{dot_eta_c},  $|\dot{\eta}_{c_i}|$ depends on $|\dot{\eta}_i| = |\Psi_i|$, $i\in\mathfrak{L}$. The proof is complete.

%\bibliographystyle{IEEEtran}
%\bibliography{Consensus_IEEE}

\end{document}